\newtheorem{theorem}{Theorem}[section]
\newtheorem{lemma}[theorem]{Lemma}
\newtheorem{proposition}[theorem]{Proposition}
\theoremstyle{definition}
\newtheorem{definition}[theorem]{Definition}
\newtheorem{example}[theorem]{Example}
\newtheorem{question}[theorem]{Question}
\numberwithin{equation}{section}
\newcommand{\Pol}{\mathcal{P}}
\newcommand{\abs}[1]{\left\lvert#1\right\rvert}
\newcommand{\var}{\mathop{\mathrm{var}}}
\newcommand{\cvar}{\mathop{\mathrm{cvar}}}
\newcommand{\vf}{\mathop{\mathrm{vf}}}
\newcommand{\BV}{\mathop{{BV}}}
\newcommand{\Borel}{\mathop{\mathrm{Borel}}}
\newcommand{\ds}{\displaystyle}
\newcommand{\norm}[1]{\left \Vert #1 \right \Vert}
\newcommand{\st}{\,:\,}
\newcommand{\C}{\mathbb{C}}
\newcommand{\R}{\mathbb{R}}
\def\ls[#1,#2]{\overline{\vphantom{\vbox to 1.2 ex{}} #1\, #2}}
\def\seg(#1,#2){\ls[\vecx_{#1},\vecx_{#2}]}
\newcommand{\veca}{{\boldsymbol{a}}}
\newcommand{\vecb}{{\boldsymbol{b}}}
\newcommand{\vecc}{{\boldsymbol{c}}}
\newcommand{\vecx}{{\boldsymbol{x}}}
\newcommand{\vecy}{{\boldsymbol{y}}}
\newcommand{\vecv}{{\boldsymbol{v}}}
\newcommand{\vecz}{{\boldsymbol{z}}}
\newcommand{\vecd}{{\boldsymbol{d}}}
\newcommand{\vecalpha}{{\boldsymbol{\alpha}}}
\newcommand{\vecbeta}{{\boldsymbol{\beta}}}
\newcommand{\vecgamma}{{\boldsymbol{\gamma}}}
\newcommand{\vecdelta}{{\boldsymbol{\delta}}}
\numberwithin{equation}{section}
\renewcommand{\Re}{\mathop{\mathrm{Re}}}
\newcommand{\mC}{\mathbb{C}}
\newcommand{\mT}{\mathbb{T}}
\newcommand{\Pt}{{\mathcal P}}
\def\implies{\Rightarrow}
\def\inpr#1,#2{\t \hbox{\langle #1 , #2 \rangle} \t}
\def\ip<#1,#2>{\langle #1,#2 \rangle}
\def\norm#1{\left \Vert #1 \right \Vert}
\def\paren(#1){\left( #1 \right)}
\def\sparen(#1){\Bigl ( #1 \Bigr )}
\def\ip<#1,#2>{\langle #1, #2 \rangle}
\newcommand{\calA}{\mathcal{A}}
\newcommand{\calB}{\mathcal{B}}
\newcommand{\calC}{\mathcal{C}}
\renewcommand{\Re}{\mathop{\mathrm{Re}}}
\renewcommand{\Im}{\mathop{\mathrm{Im}}}
\newcommand{\normbv}[1]{\left\lVert#1\right\rVert_{\BV(\sigma)}}
\newenvironment{acknowledgements}{\medskip
\noindent
\textit{Acknowledgements.}\ }{\medskip}
\begin{document}

\title{Isomorphisms of $BV(\sigma)$ spaces}

\author{Shaymaa Al-shakarchi${}^{a}$}
\author{Ian Doust}

\affil{School of Mathematics and Statistics, 
University of New South Wales, 
UNSW Sydney 2052, Australia, 
Email: i.doust@unsw.edu.au }

\affil{${}^{a}$ Current address: Department of Mathematics, Faculty of Basic Education, University of Kufa, Najaf, Iraq}

\date{}

\maketitle

\renewcommand{\thefootnote}{}

\footnote{2020 \emph{Mathematics Subject Classification}: Primary 46J10; Secondary 26B30, 47B40.}

\footnote{\emph{Key words and phrases}: Functions of bounded variation in the plane, absolutely continuous functions, $AC(\sigma)$ operators.}

\renewcommand{\thefootnote}{\arabic{footnote}}
\setcounter{footnote}{0}

\begin{abstract}
In this paper we investigate the relationship between the properties of a compact set $\sigma \subseteq \C$ and the structure of the space $BV(\sigma)$ of functions of bounded variation (in the sense of Ashton and Doust) defined on $\sigma$. For the subalgebras of absolutely continuous functions on $\sigma$, it is known that for certain classes of compact sets one obtains a Gelfand--Kolmogorov type result: the function spaces $AC(\sigma_1)$ and $AC(\sigma_2)$ are isomorphic if and only if the domain sets $\sigma_1$ and $\sigma_2$ are homeomorphic. Our main theorem is that in this case the isomorphism must extend to an isomorphism of the $BV(\sigma)$ spaces. An application is given to the spectral theory of $AC(\sigma)$ operators.
\end{abstract}

\section{Introduction}

Many of the central results in the spectral theory of linear operators involve the extension of a homomorphism $\Phi: \calA \to \calC$ between Banach algebras to a larger domain $\calB \supseteq \calA$. Examples include the spectral theorem for normal operators on a Hilbert space,  where one extends a $C(\sigma(T))$ functional calculus for an operator $T$ to a functional calculus for the bounded Borel measurable functions $\Borel(\sigma(T))$, or the spectral theorem for well-bounded operators on a reflexive Banach space, where one extends an $AC[a,b]$ functional calculus to $BV[a,b]$, the functions of bounded variation on $[a,b]$.

In some situations, special properties of the Banach algebras can be used to provide easy extension results.
The classical Gelfand--Kolmogorov Theorem says that two compact Hausdorff spaces $K_1$ and $K_2$ are homeomorphic if and only if  the  Banach algebras $C(K_1)$ and $C(K_2)$ are isomorphic (as Banach algebras). Every isomorphism $\Phi: C(K_1) \to C(K_2)$ is of the form $\Phi(f) = f \circ h^{-1}$ for some homeomorphism $h: K_1 \to K_2$. Conversely, every such homeomorphism clearly generates an algebra isomorphism $\Phi$, and this map obviously extends  to a Banach algebra isomorphism from $\Borel(K_1)$ to $\Borel(K_2)$.

In order to provide a general theory which includes both well-bounded and trigonometrically well-bounded operators, Ashton and Doust \cite{AD1} introduced two new families of Banach algebras of functions. Given a nonempty compact subset $\sigma$ of the plane, $BV(\sigma)$ denotes the set of functions $f: \sigma \to \mC$ of bounded variation on $\sigma$, and $AC(\sigma)$ denotes the subalgebra of absolutely continuous functions. (Full definitions are given in Section~\ref{S:Prelim}.) A bounded operator on a Banach space $X$ which admits an $AC(\sigma)$ functional calculus is called an $AC(\sigma)$ operator. The properties of these operators were studied in \cite{AD2}. At least on reflexive spaces, the $AC(\sigma)$ spaces play a corresponding role in the spectral theory of $AC(\sigma)$ operators to that played by $C(\sigma)$ spaces in the theory of normal operators on Hilbert space.

A natural question was to determine when two spaces $AC(\sigma_1)$ and $AC(\sigma_2)$ are isomorphic as Banach algebras.
Doust and Leinert \cite{DL1} showed that one gets one direction of a Gelfand--Kolmogorov type theorem in the setting of these spaces. If $\Phi: AC(\sigma_1) \to AC(\sigma_2)$ is an algebra isomorphism, then there exists a homeomorphism $h: \sigma_1 \to \sigma_2$ such that $\Phi(f) = f \circ h^{-1}$. In general the converse is false.  That is, not every homeomorphism $h: \sigma_1 \to \sigma_2$ generates an algebra isomorphism of the associated $AC(\sigma)$ spaces.  One can however obtain positive results if one restricts the class of compact sets that are considered. For example, if $\sigma_1$ and $\sigma_2$ are any two polygons (which we take here to include their interiors) then $AC(\sigma_1)$ and $AC(\sigma_2)$ are isomorphic \cite[Theorem~6.3]{DL1}. Similar theorems for other classes of subsets of the plane can be found in \cite{DAS} and \cite{ASD}.

In each of these papers, an intermediate step in showing that two $AC(\sigma)$ spaces were indeed isomorphic was to show that the corresponding $BV(\sigma)$ spaces were in fact isomorphic. One then needed to show that the isomorphism preserves the subalgebras of absolutely continuous functions. The aim of this paper is to show that every isomorphism of $AC(\sigma)$ spaces must be the restriction of an isomorphism of $BV(\sigma)$ spaces. Said another way, every Banach algebra isomorphism $\Phi: AC(\sigma_1) \to AC(\sigma_2)$ extends to an isomorphism from $BV(\sigma_1)$ to $BV(\sigma_2)$.

It should be noted that even in the case that a homeomorphism $h: \sigma_1 \to \sigma_2$ does generate an isomorphism $\Phi_h$ of the associated $BV(\sigma)$ spaces, this map need not preserve the $AC(\sigma)$ spaces. Examples illustrating some of the possible behaviour are given is Sections~\ref{S:Isom} and \ref{S:ACBV}.

In the final section we give an application which shows that for a class of sets $\sigma$, the functional calculus for every $AC(\sigma)$ operator on a reflexive Banach space can be extended to all the functions of bounded variation.

\section{Preliminaries}\label{S:Prelim}
In this section we shall briefly outline the definitions of the spaces $AC(\sigma)$ and $BV(\sigma)$. We shall follow a simplified development rather than the original one given in \cite{AD1}. Further details about the evolution of this definition of variation for functions defined on subsets of the plane is given in the appendix.

For the remainder of the paper, unless otherwise specified, isomorphism will mean a Banach algebra isomorphism, that is, a continuous algebra isomorphism  with a continuous inverse. It is worth noting that the $AC(\sigma)$ and $BV(\sigma)$ spaces are always semisimple Banach algebras. A consequence of this is that any algebra isomorphism between say $BV(\sigma)$ spaces is automatically a Banach algebra isomorphism. We shall write $\mathcal{A} \simeq \mathcal{B}$ to denote that $\mathcal{A}$ is isomorphic to $\mathcal{B}$.  All algebras will consist of complex-valued functions. We shall identify the plane as either $\C$ or $\R^2$ as is notationally convenient. Given two distinct points $\vecx$ and $\vecy$ in the plane, $\ls[\vecx,\vecy]$ will denote the closed line segment joining them.

Suppose that $\sigma$ is a nonempty compact subset of $\mC$ and that $f: \sigma \to \mathbb{C}$. The partitions which are used in the classical definition of variation are replaced here by finite lists of points in the domain $\sigma$. The definition needs to take into account not just the differences between the function values at points on the list, but also how these points are positioned in the plane.

Let $S = [\vecx_0,\vecx_1,\dots,\vecx_n]$ be a finite ordered list of elements of $\sigma$, where, for the moment, we shall assume that $n \ge 1$.
Let $\gamma_S$ denote the piecewise linear curve joining the points of $S$ in order. We shall usually assume that no two consecutive points are equal, but otherwise we do not require that the elements of such a list are distinct.

The \textit{curve variation of $f$ on the ordered set $S$} is defined to be
\begin{equation*} \label{lbl:298}
    \cvar(f, S) =  \sum_{i=1}^{n} \abs{f(\vecx_{i}) - f(\vecx_{i-1})}.
\end{equation*}
Associated to each list $S$ is its variation factor $\vf(S)$. Loosely speaking, this is the greatest number of times that $\gamma_S$ crosses any line in the plane.
To make this more precise we need the concept of a crossing segment.

\begin{definition}\label{crossing-defn}
Suppose that $\ell$ is a line in the plane and that $S = [\vecx_0,\vecx_1,\dots,\vecx_n]$. We say that the $j$th segment $s_j = \ls[\vecx_j,\vecx_{j+1}]$  is a \textit{crossing segment} of $S$ on $\ell$ if any one of the following holds:
\begin{enumerate}
  \item[(i)] $\vecx_{j}$ and $\vecx_{j+1}$ lie on (strictly) opposite sides of $\ell$.
  \item[(ii)] $j=0$ and $\vecx_{j} \in \ell$.
  \item[(iii)] $\vecx_{j} \not\in \ell$ and $\vecx_{j+1}\in  \ell$.
\end{enumerate}
\end{definition}

\begin{definition}\label{vf-defn}
Let $\vf(S,\ell)$ denote the number of crossing segments of $S$ on $\ell$. The \textit{variation factor} of $S$ is defined to be
 $\ds \vf(S) = \max_{\ell} \vf(S,\ell)$.
\end{definition}

Clearly $1 \le \vf(S) \le n$. For completeness, in the case that
$S =[\vecx_0]$ we set $\cvar(f, [\vecx_0]) = 0$ and let $\vf([\vecx_0],\ell) = 1$ whenever $\vecx_0 \in \ell$.

\begin{definition}\label{2d-var}
The \textit{two-dimensional variation} of a function $f : \sigma
\rightarrow \mathbb{C}$ is defined to be
\begin{equation*}
    \var(f, \sigma) = \sup_{S}
        \frac{ \cvar(f, S)}{\vf(S)},
\end{equation*}
where the supremum is taken over all finite ordered lists of elements of $\sigma$.
\end{definition}

The \textit{variation norm} of such a function is
  \[ \normbv{f} = \norm{f}_\infty + \var(f,\sigma) \]
and the set of functions of bounded variation on $\sigma$ is
  \[ \BV(\sigma) = \{ f: \sigma \to \mC \st \normbv{f} < \infty\}. \]
The space $\BV(\sigma)$ is a Banach algebra under pointwise operations \cite[Theorem 3.8]{AD1}.

Let $\Pol_2$ denote the space of polynomials in two real variables of the form $p(x,y) = \sum_{n,m} c_{nm} x^n y^m$, and let $\Pol_2(\sigma)$ denote the restrictions of elements on $\Pol_2$ to $\sigma$ (considered as a subset of $\R^2$). The algebra $\Pol_2(\sigma)$ is always a subalgebra of $\BV(\sigma)$ \cite[Corollary 3.14]{AD1}.

\begin{definition}
The set of \textit{absolutely continuous} functions on $\sigma$, denoted $AC(\sigma)$, is the closure of $\Pol_2(\sigma)$ in $\BV(\sigma)$.
\end{definition}

The set $AC(\sigma)$ forms a closed subalgebra of $\BV(\sigma)$ and hence is a Banach algebra.

It is an important but nonobvious fact that if $\sigma = [a,b]$ these definitions reduce  to the classical ones \cite[Proposition~3.6]{AD1}. Indeed, if $\sigma$ is any compact subset of the line, then these are the same as the $BV$ and $AC$ spaces considered by Saks \cite{S}. It \textit{is} clear  that these spaces are preserved under affine transformations of the plane. That is, if $h: \mC \to \mC$, $h(\vecz) = \alpha \vecz + \beta$ is a nontrivial affine map, then $\Phi_h(f) = f \circ h^{-1}$ defines an isometric isomorphism from $BV(\sigma)$ to $BV(h(\sigma))$. In particular $BV(\ls[\vecx,\vecy]) \simeq BV[0,1]$ for any distinct points $\vecx, \vecy \in \mC$.

More generally, if $h: \sigma_1 \to \sigma_2$ is a bijection between two subsets of $\mC$, we shall denote by $\Phi_h$ the map $f \mapsto f \circ h^{-1}$ which is always an isomorphism from the algebra of complex functions on $\sigma_1$ to the algebra of complex functions on $\sigma_2$. We shall use the same notation for the restriction of this map to any subalgebra of functions on $\sigma_1$.

One can show that $C^1(\sigma) \subseteq AC(\sigma) \subseteq C(\sigma)$, where one interprets $C^1(\sigma)$ as consisting of all functions for which there is a $C^1$ extension to an open neighbourhood of $\sigma$ (see \cite{DL2}).  We will need the following simple results which are easy consequences of the definition of $BV(\sigma)$ and $AC(\sigma)$. The (restriction to $\sigma$ of the) characteristic function of a set $A$ will be denoted $\chi_A$.

\begin{lemma}\label{Chi-BV}
For all $\vecz \in \sigma$, $\chi_{\{\vecz\}} \in BV(\sigma)$.
\end{lemma}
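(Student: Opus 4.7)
The plan is to establish the explicit bound $\var(\chi_{\{\vecz\}}, \sigma) \leq 4$; since $\norminf{\chi_{\{\vecz\}}} = 1$, this will give $\normbv{\chi_{\{\vecz\}}} \leq 5$ and hence membership in $BV(\sigma)$. Fix an arbitrary admissible list $S = [\vecx_0, \ldots, \vecx_n]$ (with $n \geq 1$ and no two consecutive entries equal) and write $I = \{i : \vecx_i = \vecz\}$, $k = |I|$.

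The upper bound $\cvar(\chi_{\{\vecz\}}, S) \leq 2k$ is immediate: $\chi_{\{\vecz\}}$ is $\{0,1\}$-valued, so each summand $|\chi_{\{\vecz\}}(\vecx_i) - \chi_{\{\vecz\}}(\vecx_{i-1})|$ equals $1$ precisely when exactly one of $\vecx_{i-1}, \vecx_i$ lies in $I$, and each $i \in I$ is adjacent to at most two other indices.

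The main step is the lower bound on $\vf(S)$. If $k = 0$ there is nothing to prove, so assume $k \geq 1$. Since $\{\vecx_j : \vecx_j \neq \vecz\}$ is a finite set of points distinct from $\vecz$, I can choose a line $\ell$ through $\vecz$ missing all of them. For every $i \in I$ with $i < n$, the segment $\ls[\vecx_i, \vecx_{i+1}]$ is then a crossing segment of $S$ on $\ell$: if $i = 0$, clause~(2) of Definition~\ref{crossing-defn} applies because $\vecx_0 = \vecz \in \ell$; if $i > 0$, clause~(3) applies, since $\vecx_i = \vecz \in \ell$ while $\vecx_{i-1} \neq \vecz$ (by consecutive-distinctness) forces $\vecx_{i-1} \notin \ell$. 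These segments have distinct starting indices, so $\vf(S) \geq \vf(S,\ell) \geq |I \cap \{0,\ldots,n-1\}| \geq k - 1$, and in any case $\vf(S) \geq 1$.

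Combining yields $\cvar(\chi_{\{\vecz\}}, S)/\vf(S) \leq 2k/\max(1, k-1)$, whose maximum over $k \geq 1$ is $4$ (attained at $k=2$). Taking the supremum over $S$ gives $\var(\chi_{\{\vecz\}}, \sigma) \leq 4$ and completes the proof. The only delicate point is that clauses~(2) and~(3) of the crossing-segment definition are asymmetric in the endpoints, so one must pick $\ell$ carefully (through $\vecz$ but avoiding every other point of $S$) in order to apply them uniformly at each visit of $S$ to $\vecz$; once $\ell$ is chosen the rest is arithmetic.
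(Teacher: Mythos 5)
Your argument is correct. The paper itself offers no proof of Lemma~\ref{Chi-BV} --- it is simply asserted as an ``easy consequence of the definition'' --- so there is nothing to match against; what you have done is supply the missing verification, and every step checks out. The two key observations are both sound: $\cvar(\chi_{\{\vecz\}},S)\le 2k$ because consecutive-distinctness prevents a segment from having both endpoints equal to $\vecz$, so each nonzero summand contains exactly one index from $I$ and each such index lies in at most two segments; and the line $\ell$ through $\vecz$ avoiding the finitely many other points of $S$ makes every segment leaving an occurrence of $\vecz$ a crossing segment via clauses~(2) and~(3) of Definition~\ref{crossing-defn}, giving $\vf(S)\ge\max(1,k-1)$. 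The resulting bound $\var(\chi_{\{\vecz\}},\sigma)\le 4$, hence $\normbv{\chi_{\{\vecz\}}}\le 5$, is explicit and uniform in $\sigma$ and $\vecz$, which is more than the lemma asks for. For comparison, a shorter (but less quantitative) route is available from the result the paper does cite later, namely that characteristic functions of closed half-planes lie in $BV(\sigma)$ with norm at most $2$ (Proposition~3.20 of \cite{AD1}): writing $\{\vecz\}$ as the intersection of two distinct lines through $\vecz$, each of which is the intersection of two closed half-planes, expresses $\chi_{\{\vecz\}}$ as a product of four such idempotents, and membership in $BV(\sigma)$ follows since $BV(\sigma)$ is a Banach algebra. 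Your direct argument avoids reliance on that external proposition and on the (non-isometric) algebra constant, so it is arguably the cleaner choice here.
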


\begin{lemma}\label{ReIm}
If $f \in AC(\sigma)$ then $\Re f,\, \Im f \in AC(\sigma)$.
\end{lemma}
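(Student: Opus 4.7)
The plan is to exploit the definition of $AC(\sigma)$ as the $BV(\sigma)$-closure of $\Pol_2(\sigma)$, combined with the observation that the operations $f \mapsto \Re f$ and $f \mapsto \Im f$ are continuous (indeed contractive) on $BV(\sigma)$ and preserve the subspace $\Pol_2(\sigma)$. Given $f \in AC(\sigma)$, choose a sequence $p_n \in \Pol_2(\sigma)$ with $\normbv{p_n - f} \to 0$; then I would show that $\Re p_n \to \Re f$ and $\Im p_n \to \Im f$ in $BV(\sigma)$, while each $\Re p_n$ and $\Im p_n$ lies in $\Pol_2(\sigma)$, so the limits lie in $AC(\sigma)$.

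The first ingredient is the contractivity estimate on $BV(\sigma)$: for any $g \in BV(\sigma)$, $\normbv{\Re g} \le \normbv{g}$. The supremum-norm part is immediate from $|\Re g(\vecz)| \le |g(\vecz)|$. For the variation part, given any finite ordered list $S = [\vecx_0,\dots,\vecx_n]$ in $\sigma$, the inequality $|\Re g(\vecx_i) - \Re g(\vecx_{i-1})| \le |g(\vecx_i) - g(\vecx_{i-1})|$ gives $\cvar(\Re g, S) \le \cvar(g,S)$, and since the variation factor $\vf(S)$ depends only on $S$, dividing and taking the supremum over $S$ yields $\var(\Re g,\sigma) \le \var(g,\sigma)$. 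The identical argument handles $\Im g$. Applied to $g = p_n - f$ this gives the desired $BV$-convergence of $\Re p_n$ and $\Im p_n$.

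The second ingredient is that $\Re$ and $\Im$ preserve $\Pol_2(\sigma)$: if $p(x,y) = \sum c_{nm} x^n y^m$ with $c_{nm} \in \C$, writing $c_{nm} = a_{nm} + i b_{nm}$ gives $\Re p(x,y) = \sum a_{nm} x^n y^m$ and $\Im p(x,y) = \sum b_{nm} x^n y^m$, both still in $\Pol_2$. Combining the two ingredients, $\Re f$ and $\Im f$ are $BV(\sigma)$-limits of elements of $\Pol_2(\sigma)$, hence belong to $AC(\sigma)$.

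I do not expect any genuine obstacle here; the only point requiring a moment's care is verifying that taking real/imaginary parts maps $\Pol_2$ into itself, which is immediate once one recognises that the coefficients are complex.
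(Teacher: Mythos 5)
Your proof is correct and is precisely the routine argument the paper has in mind: the paper states this lemma without proof as an ``easy consequence of the definition of $BV(\sigma)$ and $AC(\sigma)$'', and your two ingredients (contractivity of $\Re$, $\Im$ on $BV(\sigma)$ via $\lvert\Re z - \Re w\rvert \le \lvert z-w\rvert$, and invariance of $\Pol_2(\sigma)$ under taking real/imaginary parts of the coefficients) supply exactly that omitted argument. No gaps.
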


%
%

\section{Isomorphisms of $BV(\sigma)$ spaces}\label{S:Isom}

An obvious problem is to determine the nature of the possible algebra homomorphisms between two $BV(\sigma)$ spaces. As noted in the introduction, every algebra isomorphism $\Phi: AC(\sigma_1) \to AC(\sigma_2)$ is of the form $\Phi(f) = f\circ h^{-1}$ where $h: \sigma_1 \to \sigma_2$ is a homeomorphism.
It is easy to see that this result does not extend  to $BV(\sigma)$ spaces.

\begin{example}
 Let $\sigma_1 = \sigma_2 = [0,1]$ and define the bijection $h:\sigma_1 \to \sigma_2$,
  \[ h(x) = \begin{cases}
              \frac{1}{2}-x,  & \text{if $0 \le x \le \frac{1}{2}$,} \\
              x,              & \text{if $\frac{1}{2} < x \le 1$.}
  \end{cases}
  \]

A simple rearrangement of the variation of $\Phi_h(f)$ over $[0,1]$ shows that
  \[ \var(\Phi_h(f),[0,1]) \le 2 \var(f,[0,1]) \]
and so (noting that $\Phi_h^{-1} = \Phi_h$),
  \[ \frac{1}{2} \norm{f}_{BV[0,1]} \le \norm{\Phi_h(f)}_{BV[0,1]} \le 2 \norm{f}_{BV[0,1]}. \]
Thus $\Phi_h$ is a Banach algebra isomorphism from $BV(\sigma_1)$ to $BV(\sigma_2)$. But of course the map $h$ is not a homeomorphism in this case.
\end{example}

In a positive direction we have the following.

\begin{theorem}\label{BV-bijection}
Suppose that $\sigma_{1}$ and $\sigma_{2}$ are nonempty compact subsets of the plane. If $\Phi: BV(\sigma_{1})\to BV(\sigma_{2})$ is an algebra isomorphism then there exists a bijection $h:\sigma_{1}\to \sigma_{2}$ such that $\Phi(f)=f\circ h^{-1}$ for all $f\in BV(\sigma_{1})$.
\end{theorem}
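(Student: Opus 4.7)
The plan is to recover the bijection $h$ from the idempotent structure of the algebras $BV(\sigma_i)$, using Lemma~\ref{Chi-BV} as the essential input, and then to read off the formula $\Phi(f) = f \circ h^{-1}$ from the pointwise identity $f \cdot \chi_{\{z\}} = f(z)\,\chi_{\{z\}}$. The argument is purely algebraic; continuity of $\Phi$ is not needed.

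First I would observe that the idempotents in $BV(\sigma)$ are exactly the characteristic functions $\chi_E$ for subsets $E \subseteq \sigma$ with $\chi_E \in BV(\sigma)$, since $f^2 = f$ forces $f$ to be $\{0,1\}$-valued pointwise. Call a nonzero idempotent $e$ \emph{minimal} if it cannot be written as $e = e_1 + e_2$ with $e_1, e_2$ nonzero idempotents satisfying $e_1 e_2 = 0$. I claim $\chi_E$ is minimal if and only if $E$ is a singleton: if $|E| \ge 2$, pick $w \in E$ and decompose $\chi_E = \chi_{\{w\}} + \chi_{E \setminus \{w\}}$, where both summands are nonzero idempotents in $BV(\sigma)$ — the first by Lemma~\ref{Chi-BV}, the second as the difference $\chi_E - \chi_{\{w\}}$. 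Conversely, any decomposition of $\chi_{\{z\}}$ into two orthogonal idempotents $\chi_A + \chi_B$ forces $A \cup B = \{z\}$ and $A \cap B = \emptyset$, so one of them is empty.

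Since $\Phi$ is an algebra isomorphism, it preserves the set of idempotents, the relation $e_1 e_2 = 0$, and hence minimality (if $\Phi(e) = e_1' + e_2'$ is a nontrivial orthogonal decomposition, then so is $e = \Phi^{-1}(e_1') + \Phi^{-1}(e_2')$). Therefore, for each $z \in \sigma_1$, the image $\Phi(\chi_{\{z\}})$ is a minimal idempotent in $BV(\sigma_2)$, so $\Phi(\chi_{\{z\}}) = \chi_{\{w\}}$ for a uniquely determined $w \in \sigma_2$. Define $h: \sigma_1 \to \sigma_2$ by $h(z) = w$. Applying the same reasoning to $\Phi^{-1}$ produces a two-sided inverse of $h$, so $h$ is a bijection.

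Finally, for any $f \in BV(\sigma_1)$ and any $z \in \sigma_1$, the pointwise identity $f \cdot \chi_{\{z\}} = f(z)\, \chi_{\{z\}}$ holds in $BV(\sigma_1)$, and applying $\Phi$ gives
\[
\Phi(f) \cdot \chi_{\{h(z)\}} = f(z)\, \chi_{\{h(z)\}},
\]
which forces $\Phi(f)(h(z)) = f(z)$. Since $h$ is a bijection onto $\sigma_2$, this is exactly $\Phi(f) = f \circ h^{-1}$. The only mildly delicate step is the characterization of minimal idempotents as singleton characteristic functions, and that reduces to Lemma~\ref{Chi-BV} together with the stability of $BV(\sigma)$ under finite differences.
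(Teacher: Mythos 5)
Your proof is correct and follows essentially the same route as the paper's: both recover $h$ by using Lemma~\ref{Chi-BV} to show that $\Phi$ must carry each minimal idempotent $\chi_{\{z\}}$ to a singleton characteristic function, and then invoke $\Phi^{-1}$ to get bijectivity. You are in fact slightly more complete than the paper, which stops at the bijection; your final step deriving $\Phi(f)(h(z))=f(z)$ from $f\cdot\chi_{\{z\}}=f(z)\,\chi_{\{z\}}$ supplies the verification of the formula $\Phi(f)=f\circ h^{-1}$ that the paper leaves implicit.
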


\begin{proof}
Since $\Phi$ is an algebra isomorphism, it must map idempotents to idempotents. Note that by Lemma~\ref{Chi-BV}, for all $z \in \sigma_1$, the function $f_z = \chi_{\{z\}}$ lies in $BV(\sigma_1)$ and hence $g_z = \Phi(f_z)$ is an idempotent in $BV(\sigma_2)$. Since $\Phi$ is one-to-one, $g_z$ is not the zero function and hence the support of $g_z$ is a nonempty set $\tau \subseteq \sigma_2$. If $\tau$ is more than a singleton then we can choose $w \in \tau$ and write $g_z = \chi_{\{w\}} + \chi_{S\tau\setminus \{w\}}$ as a sum of two nonzero idempotents in $BV(\sigma_2)$. But then $f_z = \Phi^{-1}(\chi_{\{w\}}) + \Phi^{-1}(\chi_{S\setminus \{w\}})$ is the sum of two nonzero idempotent in $\BV(\sigma_1)$ which is impossible. It follows that $g_z$ is the characteristic function of a singleton set and this clearly induces a map $h: \sigma_1 \to \sigma_2$ so that $\Phi(f_z) = \chi_{\{h(z)\}}$. Indeed, by considering $\Phi^{-1}$ it is clear that $h$ must be a bijection between the two sets.
\end{proof}

There are several questions one might ask concerning possible strengthening of Theorem~\ref{BV-bijection}.

\begin{question}\label{quest00} Suppose that $h: \sigma_1 \to \sigma_2$ is a bijection. Does $\Phi_h$ map  $BV(\sigma_1)$ to $BV(\sigma_2)$?
\end{question}

\begin{question}\label{quest000} Suppose that $\sigma_1$ and $\sigma_2$ are homeomorphic. Is $BV(\sigma_1) \simeq BV(\sigma_2)$?
\end{question}

\begin{question}\label{quest2} Suppose that $BV(\sigma_1) \simeq BV(\sigma_2)$. Is $\sigma_1$ homeomorphic to $\sigma_2$?
\end{question}

Questions~\ref{quest00} is  easily disposed of.

\begin{example}\label{bij-not-bv} Let $\sigma_1 = \{0,1,-1,\frac{1}{2},-\frac{1}{2},\frac{1}{3},-\frac{1}{3},\dots\}$ and let $\sigma_2 = \{0,1,\frac{1}{2},\frac{1}{3},\dots\}$. Define $h: \sigma_1 \to \sigma_2$ by
 \[ h(x) = \begin{cases}
            0, & \text{if $x = 0$,} \\
            \frac{1}{2n-1}, & \text{if $x = -\frac{1}{n} < 0$,} \\
            \frac{1}{2n} , & \text{if $x = \frac{1}{n} > 0$.}
           \end{cases} \]
It is readily checked that $h$ is a  bijection (indeed a homeomorphism). If $f$ is the characteristic function of the positive elements of $\sigma_1$ then $f \in BV(\sigma_1)$ but $\Phi_h(f)$ is not in $BV(\sigma_2)$.
\end{example}

We can see from \cite[Corollary 5.12]{DAS}  that in fact no bijection between the two sets in Example~\ref{bij-not-bv} determines an isomorphism of the spaces of functions of bounded variation, so this gives a negative answer to Question~\ref{quest000}.

An examination of the proof of Theorem 3.1 in \cite{DL1} shows that if $h$ is any homeomorphism from the unit square to the closed unit disk, the map $\Phi_h$ must be unbounded with respect to the $BV$ norms, and hence this provides another counterexample to answer Question~\ref{quest000}.

Showing that the answer to Question~\ref{quest2} is also `no' is a little harder.

\begin{example}\label{non-homeo-ex}
Consider the two sets $\sigma$ and $\tau$ shown in Figure~\ref{non-homeo}. These sets are clearly not homeomorphic. Let $h$ be a bijection which maps the blue path from $\veca$ to $\vecb$ onto the closed line segment $[\vecalpha,\vecbeta]$ and the half-open line segment from $\vecc$ to $\vecd$ onto the half-open line segment from $\vecbeta$ to $\vecdelta$.

Both $\sigma$ and $\tau$ are examples of what are called linear graphs in \cite{ASD}. The algebras of functions of bounded variation on such sets admit an equivalent linear graph norm. In this case if $f: \sigma \to \mC$, then $\norm{f}_{LG(\sigma)} = \norm{f}_\infty + \var(f,\ls[\veca,\vecc]) + \var(f,\ls[\vecc,\vecb]) + \var(f,\ls[\vecc,\vecd])$ while if $g: \tau \to \mC$,
   \[ \norm{g}_{LG(\tau)}
   = \norm{g}_\infty + \var(g,\ls[\vecalpha,\vecdelta])
   = \norm{g}_\infty + \var(g,\ls[\vecalpha,\vecgamma]) + \var(g,\ls[\vecgamma,\vecbeta]) + \var(g,\ls[\vecbeta,\vecdelta]). \]
The important fact proved in \cite[Theorem 3]{ASD} is that the linear graph norm is always equivalent (as a Banach space norm) to the $BV$ norm.

\begin{figure}[H]
\begin{center}
\begin{tikzpicture}[scale=2]
 \draw[red, ultra thick]  (0,1) -- (0,0);
 \draw[blue, ultra thick] (-0.866,-0.5) -- (0,0) -- (0.866,-0.5);
 \draw (-0.866,-0.5) node[below] {$\veca$};
 \draw (0.866,-0.5) node[below] {$\vecb$};
 \draw (0,0.1) node[right] {$\vecc$};
 \draw (0,1) node[right] {$\vecd$};

\draw[black] (0,1) node[circle, draw, fill=black!50,inner sep=0pt, minimum width=4pt] {};

 \draw[black] (-0.866,-0.5) node[circle, draw, fill=black!50,inner sep=0pt, minimum width=4pt] {};

 \draw[black] (0,0) node[circle, draw, fill=black!50,inner sep=0pt, minimum width=4pt] {};

 \draw[black] (0.8666,-0.5) node[circle, draw, fill=black!50,inner sep=0pt, minimum width=4pt] {};
 \draw (0,-1) node {$\sigma$};
 \end{tikzpicture}
 \hspace{2.5cm}
 \begin{tikzpicture}
 \draw[red, ultra thick] (4,0.1) -- (6,0.1);
 \draw[blue, ultra thick] (2,0.1) -- (4,0.1);
 \draw (2,0.1) node[below] {$\vecalpha=h(\veca)$};
 \draw (3,0.1) node[above] {$\vecgamma=h(\vecc)$};
 \draw (4,0.1) node[below] {$\vecbeta=h(\vecb)$};
 \draw (6,0.1) node[below] {$\vecdelta=h(\vecd)$};

 \draw[black] (2,0.1) node[circle, draw, fill=black!50,inner sep=0pt, minimum width=4pt] {};

 \draw[black] (3,0.1) node[circle, draw, fill=black!50,inner sep=0pt, minimum width=4pt] {};

 \draw[black] (4,0.1) node[circle, draw, fill=black!50,inner sep=0pt, minimum width=4pt] {};

 \draw[black] (6,0.1) node[circle, draw, fill=black!50,inner sep=0pt, minimum width=4pt] {};
  \draw (3.9,-1) node {$\tau$};

\end{tikzpicture}
\end{center}
\caption{Two non-homeomorphic compact sets with $BV(\sigma) \simeq BV(\tau)$.}\label{non-homeo}
\end{figure}
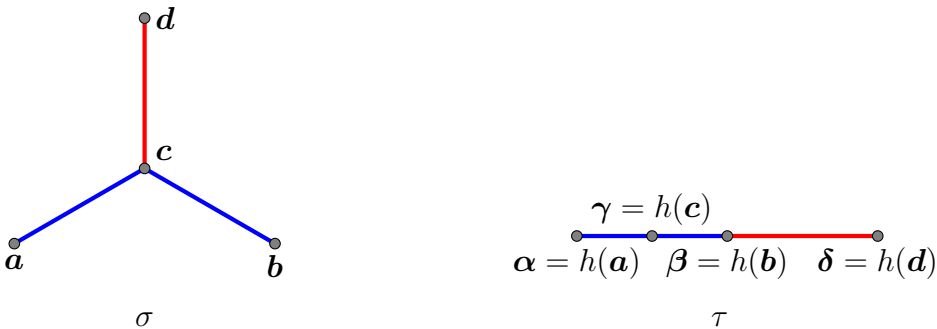

Suppose that $f \in BV(\sigma)$ and that $g = \Phi_h(f) = f \circ h^{-1} \in BV(\tau)$. It is clear that
$\norm{f}_\infty = \norm{g}_\infty$, that $\var(f,\ls[\veca,\vecc]) = \var(g,\ls[\vecalpha,\vecgamma])$ and that $\var(f,\ls[\vecc,\vecb]) = \var(g,\ls[\vecgamma,\vecbeta])$. One may verify that
  \[ \var(g,\ls[\vecbeta,\vecdelta]) \le \var(f,\ls[\vecc,\vecb]) + \var(f,\ls[\vecc,\vecd]) \]
and consequently $\norm{g}_{LG(\tau)} \le 2 \norm{f}_{LG(\sigma)}$. An analogous calculation also shows that $\norm{f}_{LG(\sigma)} \le 2\norm{g}_{LG(\tau)} $.

Using the fact that the $LG$ norms are equivalent to the $BV$ norms. this implies that the algebra isomorphism $\Phi_h$ is continuous, with continuous inverse, from $BV(\sigma)$ to $BV(\tau)$, and hence these spaces are isomorphic as Banach algebras.

Note that since  $\sigma$ is not homeomorphic
to $\tau$ we know that $AC(\sigma) \not\simeq AC(\tau )$.
\end{example}

What really determines whether a bijection defines an isomorphism of the $BV(\sigma)$ spaces is what it does to the variation factors of lists of points.

\begin{definition} Suppose that $h: \sigma_1 \to \sigma_2$ is a bijection.
\begin{enumerate}
\item If $S = [\vecx_0,\vecx_1,\dots,\vecx_n]$ is a finite list of elements of $\sigma_1$, we denote
the corresponding list of elements in $\sigma_1$
by $h(S) = [h(\vecx_0),h(\vecx_1),\dots,h(\vecx_n)]$.
\item The \textbf{variation factor of $h$} is
  \[ \vf(h) = \sup_S \frac{\vf(S)}{\vf(h(S))}. \]
\end{enumerate}
\end{definition}

Note that $\vf(h)$ is always at least $1$, and that $\vf(h)$ may be infinite.

\begin{lemma}\label{bv-boundedness} Suppose that $h: \sigma_1 \to \sigma_2$ is a bijection and that $\vf(h) = K< \infty$.
If $f \in BV(\sigma_1)$ then $\Phi_h(f) \in BV(\sigma_2)$ and
   \[ \norm{\Phi_h(f)}_{BV(\sigma_2)} \le K \norm{f}_{BV(\sigma_1)}. \]
Equivalently, if $\Phi_h$ is not bounded on $BV(\sigma_1)$ then $\vf(h) = \infty$.
\end{lemma}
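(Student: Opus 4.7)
The plan is very direct: transfer each list in $\sigma_2$ back to $\sigma_1$ via $h^{-1}$ and bound the relevant quotient using the definition of $\vf(h)$. First I would observe that because $h$ is a bijection, $\|\Phi_h(f)\|_\infty = \|f \circ h^{-1}\|_\infty = \|f\|_\infty$, so the whole problem reduces to controlling $\var(\Phi_h(f),\sigma_2)$ by $K\var(f,\sigma_1)$.

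Next I would fix an arbitrary finite ordered list $T = [\vecy_0, \vecy_1, \dots, \vecy_n]$ of points of $\sigma_2$ with no two consecutive entries equal. Setting $\vecx_i = h^{-1}(\vecy_i)$ and $S = [\vecx_0, \dots, \vecx_n]$, the injectivity of $h^{-1}$ guarantees that $S$ is also an admissible list in $\sigma_1$ (no two consecutive entries coincide), and by construction $T = h(S)$. A term-by-term comparison gives
\[ \cvar(\Phi_h(f),T) = \sum_{i=1}^n |f(h^{-1}(\vecy_i)) - f(h^{-1}(\vecy_{i-1}))| = \cvar(f,S). \]
The singleton case $n=0$ is trivial, contributing $0$ to the curve variation on either side.

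By the definition of $\vf(h)$, $\vf(S) \le K\,\vf(h(S)) = K\,\vf(T)$, so
\[ \frac{\cvar(\Phi_h(f),T)}{\vf(T)} = \frac{\cvar(f,S)}{\vf(T)} \le K\,\frac{\cvar(f,S)}{\vf(S)} \le K\var(f,\sigma_1). \]
Taking the supremum over all admissible lists $T$ in $\sigma_2$ yields $\var(\Phi_h(f),\sigma_2) \le K\var(f,\sigma_1)$, and since $K \ge 1$ we obtain
\[ \|\Phi_h(f)\|_{BV(\sigma_2)} = \|f\|_\infty + \var(\Phi_h(f),\sigma_2) \le K\bigl(\|f\|_\infty + \var(f,\sigma_1)\bigr) = K\|f\|_{BV(\sigma_1)}. \]

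There is no serious obstacle here; the argument is essentially unpacking the definition. The only mild subtlety is verifying that admissibility of lists (no repeated consecutive entries) is preserved in both directions by $h$ and $h^{-1}$, which follows immediately from bijectivity.
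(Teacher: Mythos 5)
Your proof is correct and follows essentially the same route as the paper: pull each list in $\sigma_2$ back through $h^{-1}$, note that the curve variation is unchanged, and use $\vf(S) \le K\,\vf(h(S))$ together with $\norm{\Phi_h(f)}_\infty = \norm{f}_\infty$ and $K \ge 1$. Your extra remarks about admissibility of lists and the singleton case are fine but add nothing beyond what the paper's one-line computation already covers.
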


\begin{proof} Let ${\hat S}$ be a finite list of points in $\sigma_2$. As $h$ is a bijection, there exists a finite list $S$ in $\sigma_1$ such that ${\hat S} = h(S)$. Then
  \[ \frac{\cvar(\Phi_h(f),{\hat S})}{\vf({\hat S})}
      = \frac{\cvar(f,S)}{\vf(h(S))}
      \le \frac{K \cvar(f,S)}{\vf(S)}
      \le K \var(f,\sigma_1).\]
Since $\norm{\Phi_h(f)}_\infty = \norm{f}_\infty$, the result follows.
\end{proof}

\begin{lemma}\label{vf(h)-lem} If $\vf(h) = \infty$ then $\Phi_h$ is not bounded on $BV(\sigma_1)$ and hence $\Phi_h$ does not map $BV(\sigma_1)$ onto $BV(\sigma_2)$.
\end{lemma}

\begin{proof}
Suppose that $\vf(h) = \infty$ and that $K > 1$. Then there exists a finite list $S = [\vecx_0,\dots,\vecx_n]$ in $\sigma_1$ such that $\frac{\vf(S)}{\vf(h(S))} > K$.
Suppose that $\vf(S) = m$, and note that $m > 1$. Choose any line $\ell$ such that $\vf(S,\ell) = \vf(S)$.
This line determines two closed half-planes whose intersection is $\ell$. By \cite[Proposition~3.20]{AD1}, the characteristic functions of these half planes, $\chi_1$ and $\chi_2$, are of bounded variation on $\sigma_1$ with $\norm{\chi_i}_{BV(\sigma_1)} \le 2$.

By definition, $m$ of the segments in $S$ are crossing segments of $S$ on $\ell$. Of these, at least $m-1$ must satisfy either rule (i) or rule (iii) of Definition~\ref{crossing-defn}. If $\ls[\vecx_{j},\vecx_{j+1}]$ satisfies rule (i), then $|\chi_i(\vecx_{j+1}) - \chi_i(\vecx_{j})| = 1$ for each $i$. If $\ls[\vecx_{j},\vecx_{j+1}]$ satisfies rule (iii), then $|\chi_i(\vecx_{j+1}) - \chi_i(\vecx_{j})| = 1$ for one value of $i$. Combining these facts shows that for at least one of the value $i=1$ or $i=2$,
  \begin{equation}\label{pick-i} \sum_{j=1}^n  |\chi_i(\vecx_j) - \chi_i(\vecx_{j-1})| \ge \frac{m-1}{2}. \end{equation}

Fix $i$ so that (\ref{pick-i}) holds. Then, as $m > 1$,
  \[ \var(\Phi_h(f))
     \ge \frac{\cvar(\Phi_h(f),h(S))}{\vf(h(S))}
        =\frac{\cvar(f,S)}{\vf(h(S))}
        \ge \frac{m-1}{2} \cdot \frac{K}{m}
        \ge \frac{K}{4} .
   \]
Thus
   \[ \norm{\Phi_h} \ge \frac{\norm{\Phi_h(f)}_{BV(\sigma_2)}}{\norm{f}_{BV(\sigma_1)}} \ge \frac{K+1}{8}. \]
Since $K$ was arbitrary, $\Phi_h$ is unbounded. The final conclusion of the lemma follows from the Banach Isomorphism Theorem.
\end{proof}

\begin{theorem}\label{h-vf-cond} Suppose that $h: \sigma_1 \to \sigma_2$ is a bijection. Then $\Phi_h$ is a (Banach algebra) isomorphism from $BV(\sigma_1)$ to $BV(\sigma_2)$ if and only if $\vf(h)$ and $\vf(h^{-1})$ are both finite.
\end{theorem}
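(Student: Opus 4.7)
The plan is to package the two preceding lemmas. Theorem~\ref{h-vf-cond} should be essentially a corollary: the first (unnumbered) lemma supplies the forward map bound under $\vf(h)<\infty$, while Lemma~\ref{vf(h)-lem} supplies the converse implication for boundedness. There is almost no new work to do; the task is to carefully marshal these into a proof of the ``isomorphism'' statement and to verify the algebra-homomorphism part, which the lemmas do not explicitly address.

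For the ($\Leftarrow$) direction, I would begin by observing that if $h: \sigma_1 \to \sigma_2$ is any bijection, then $\Phi_h(f) = f \circ h^{-1}$ is automatically an algebra homomorphism on the algebra of \emph{all} complex-valued functions on $\sigma_1$, since composition with $h^{-1}$ respects pointwise addition and multiplication. Moreover $\Phi_{h^{-1}} = \Phi_h^{-1}$ as maps between function spaces. Assuming $\vf(h)<\infty$, the first lemma tells us that $\Phi_h$ carries $BV(\sigma_1)$ into $BV(\sigma_2)$ with operator norm at most $\vf(h)$. Assuming $\vf(h^{-1})<\infty$, the same lemma (applied to $h^{-1}$) gives that $\Phi_{h^{-1}}$ carries $BV(\sigma_2)$ into $BV(\sigma_1)$ boundedly. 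Since these two maps are mutual inverses and both are bounded algebra homomorphisms, $\Phi_h$ is a Banach algebra isomorphism.

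For the ($\Rightarrow$) direction, suppose $\Phi_h$ is a Banach algebra isomorphism from $BV(\sigma_1)$ to $BV(\sigma_2)$. By the definition of Banach algebra isomorphism adopted in Section~\ref{S:Prelim}, $\Phi_h$ is bounded and $\Phi_h^{-1}$ is bounded. Theorem~\ref{BV-bijection} already guarantees that any isomorphism has the form $f \mapsto f \circ h^{-1}$ for some bijection $h$; in our setting $h$ is given, and $\Phi_h^{-1}$ equals $\Phi_{h^{-1}}$. Applying Lemma~\ref{vf(h)-lem} to the bounded map $\Phi_h: BV(\sigma_1)\to BV(\sigma_2)$ yields $\vf(h)<\infty$. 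Applying it again to the bounded map $\Phi_{h^{-1}}: BV(\sigma_2) \to BV(\sigma_1)$ (with the roles of $\sigma_1$ and $\sigma_2$ swapped) yields $\vf(h^{-1}) < \infty$.

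There is no genuine obstacle here; the only point that requires a moment's care is noting that $\Phi_h$ is always an algebra homomorphism on functions (so that the content of the two lemmas — boundedness of the operator — suffices to upgrade $\Phi_h$ to an algebra isomorphism once the inverse is also shown to be bounded), and that $\Phi_h$ on $BV(\sigma_1)$ has $\Phi_{h^{-1}}$ as its two-sided inverse. Both observations are immediate from the pointwise definition of $\Phi_h$.
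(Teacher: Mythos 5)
Your proposal is correct and follows exactly the paper's own argument: the paper likewise notes that $\Phi_h$ is always an algebra isomorphism on the algebras of all functions and then invokes the two preceding lemmas to equate boundedness of $\Phi_h$ and $\Phi_{h^{-1}}$ with finiteness of $\vf(h)$ and $\vf(h^{-1})$. No issues.
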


\begin{proof} $\Phi_h$ is always an algebra isomorphism from the algebra of all complex-valued functions on $\sigma_1$ to the algebra of all functions on $\sigma_2$. By the previous lemmas, the boundedness of $\Phi_h$ and $\Phi_h^{-1} = \Phi_{h^{-1}}$ is equivalent to the conditions that $\vf(h)$ and $\vf(h^{-1})$ are finite.
\end{proof}

%
%

\section{$AC(\sigma)$ and $BV(\sigma)$ spaces}\label{S:ACBV}

The isomorphisms between pairs of $AC(\sigma)$ spaces constructed in \cite{ASD,DAS,DL1} are all restrictions of isomorphisms of the corresponding $BV(\sigma)$ spaces. As we have seen in Example~\ref{non-homeo-ex}, not every isomorphism of $BV(\sigma)$ spaces preserves the subalgebras of absolutely continuous functions.

Two natural questions arise.

\begin{question}\label{quest3} Suppose that $BV(\sigma_1) \simeq BV(\sigma_2)$ via the isomorphism $\Phi_h(f) = f \circ h^{-1}$ where $h: \sigma_1 \to \sigma_2$ is a homeomorphism. Does $\Phi_h$ map $AC(\sigma_1)$ to $AC(\sigma_2)$?
\end{question}

\begin{question}\label{quest4} Suppose that $\Phi_h$ is an isomorphism from $AC(\sigma_1)$ to $AC(\sigma_2)$. Does $\Phi_h$ extend to an isomorphism from $BV(\sigma_1)$ to $BV(\sigma_2)$?
\end{question}

Question~\ref{quest3} is easily dealt with.

\begin{lemma}\label{h-ac}
Suppose that $\sigma_1, \sigma_2 \subseteq \mathbb{C}$ and that $h: \sigma_1 \to \sigma_2$ is a homeomorphism. If $\Phi_h$ is an isomorphism from $AC(\sigma_1)$ to $AC(\sigma_2)$ then $h \in AC(\sigma_1)$ and $h^{-1} \in AC(\sigma_2)$.
\end{lemma}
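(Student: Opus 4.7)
The plan is to exploit the fact that the complex coordinate function $z \mapsto z$ lies in $AC(\sigma)$ for every compact $\sigma \subseteq \mC$. Indeed, writing $z = x + iy$, this is a polynomial in the two real variables, so the identity map $\iota_\sigma(z) = z$ belongs to $\Pol_2(\sigma) \subseteq AC(\sigma)$ by the discussion in Section~\ref{S:Prelim}.

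First I would unpack the inverse of $\Phi_h$. Since $\Phi_h(f) = f \circ h^{-1}$ for $f \in AC(\sigma_1)$, the (assumed) inverse isomorphism $\Phi_h^{-1}: AC(\sigma_2) \to AC(\sigma_1)$ must send $g \in AC(\sigma_2)$ to $g \circ h$. Applying $\Phi_h^{-1}$ to $\iota_{\sigma_2} \in AC(\sigma_2)$ then yields $\iota_{\sigma_2} \circ h = h$, and since $\Phi_h^{-1}$ maps into $AC(\sigma_1)$, we conclude $h \in AC(\sigma_1)$.

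Symmetrically, I would apply $\Phi_h$ itself to $\iota_{\sigma_1} \in AC(\sigma_1)$. This gives $\Phi_h(\iota_{\sigma_1}) = \iota_{\sigma_1} \circ h^{-1} = h^{-1}$, and since the image lies in $AC(\sigma_2)$ by hypothesis, this shows $h^{-1} \in AC(\sigma_2)$.

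There is no real obstacle here; the only subtlety is recognising that the complex identity function $z \mapsto z$ is an element of $\Pol_2(\sigma)$ (via $z = x + iy$), and hence automatically belongs to $AC(\sigma)$ regardless of the geometry of $\sigma$. Once that observation is made, the lemma follows by simply evaluating the isomorphism and its inverse on these coordinate functions.
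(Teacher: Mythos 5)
Your proposal is correct and follows essentially the same route as the paper: apply $\Phi_h$ to the identity function $\vecz \mapsto \vecz$ (which lies in $\Pol_2(\sigma_1) \subseteq AC(\sigma_1)$) to obtain $h^{-1} \in AC(\sigma_2)$, and apply $\Phi_h^{-1}$ to the identity on $\sigma_2$ for the other containment. The extra observation that $z = x + iy$ exhibits the identity as a polynomial in two real variables is exactly the justification the paper leaves implicit.
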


\begin{proof}
The map identity map $f(\vecx) = \vecx$ is clearly in $AC(\sigma_1)$ and so $\Phi_h(f) = h^{-1} \in AC(\sigma_2)$. The same argument applied to $\Phi^{-1}$ shows that $h \in AC(\sigma_1)$.
\end{proof}

\begin{example}
Let $\sigma_1 = \sigma_2 = [0,1]$. Let $h: \sigma_1 \to \sigma_2$ be an increasing bijection which is not absolutely continuous. (For example, $h(x) = \frac{1}{2}(x+C(x))$ where $C$ is the Cantor function.) Then $\Phi_h$ is an isomorphism from $BV(\sigma_1)$ to $BV(\sigma_2)$, but, by the lemma, it does not map $AC(\sigma_1)$ to $AC(\sigma_2)$. It follows that the answer to Question~\ref{quest3}  is `no'.
\end{example}

The following fact will be needed in the proof of the main result, Theorem~\ref{ac-bv-extend}.

\begin{lemma}\label{intoAC}
Suppose that $\sigma_1, \sigma_2 \subseteq \mathbb{C}$ and that $h: \sigma_1 \to \sigma_2$ is a homeomorphism such that $h^{-1} \in AC(\sigma_2)$. Then
for any $p \in \Pt_2(\sigma_1)$, $\Phi_h(p) \in AC(\sigma_2)$.
\end{lemma}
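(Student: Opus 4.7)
The plan is to exploit the fact that $AC(\sigma_2)$ is a Banach \emph{algebra} containing both the real and imaginary parts of any of its elements, so that it is closed under the operations used to build polynomials in two real variables.

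First, I would identify $\sigma_1 \subseteq \R^2$ and note that any $p \in \Pt_2(\sigma_1)$ has the form $p(x,y) = \sum_{n,m} c_{nm} x^n y^m$. Equivalently, writing $\vecz = x+iy$, we can regard $p$ as
\[
p(\vecz) = \sum_{n,m} c_{nm}\, (\Re \vecz)^n (\Im \vecz)^m.
\]
Substituting $\vecz = h^{-1}(\vecw)$ for $\vecw \in \sigma_2$ yields
\[
\Phi_h(p)(\vecw) = p\bigl(h^{-1}(\vecw)\bigr) = \sum_{n,m} c_{nm}\, \bigl(\Re h^{-1}(\vecw)\bigr)^n \bigl(\Im h^{-1}(\vecw)\bigr)^m.
\]
So $\Phi_h(p)$ is exhibited as a polynomial expression (with the same coefficients $c_{nm}$) in the two real-valued functions $\Re h^{-1}$ and $\Im h^{-1}$ defined on $\sigma_2$.

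Next, I would assemble the ingredients already available. By hypothesis $h^{-1} \in AC(\sigma_2)$. Lemma~\ref{ReIm} then gives $\Re h^{-1}, \Im h^{-1} \in AC(\sigma_2)$. Since $AC(\sigma_2)$ is a Banach algebra (a subalgebra of $BV(\sigma_2)$ closed under pointwise operations), it is closed under finite sums and finite products. Applying this finitely many times to the expression above shows that each monomial $(\Re h^{-1})^n (\Im h^{-1})^m$ lies in $AC(\sigma_2)$, and then so does the (finite) linear combination $\Phi_h(p)$. This completes the argument.

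There is essentially no obstacle here beyond correctly translating the two-variable polynomial $p(x,y)$ into a combination of $\Re h^{-1}$ and $\Im h^{-1}$; the real work has already been done in Lemma~\ref{ReIm} and in the fact that $AC(\sigma_2)$ is an algebra. The result will presumably be used in the proof of Theorem~\ref{ac-bv-extend} by taking limits in $BV(\sigma_2)$: since $\Pt_2(\sigma_1)$ is dense in $AC(\sigma_1)$, continuity of the candidate extension together with this lemma will place $\Phi_h(f)$ in the closure $AC(\sigma_2)$ of $\Pt_2(\sigma_2)$ inside $BV(\sigma_2)$.
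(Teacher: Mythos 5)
Your proof is correct and follows essentially the same route as the paper: both reduce to the coordinate functions, apply Lemma~\ref{ReIm} to $h^{-1}$ to get $\Re h^{-1},\Im h^{-1}\in AC(\sigma_2)$, and then use the algebra structure to handle a general polynomial. The only cosmetic difference is that you expand $p$ into monomials explicitly, while the paper phrases the last step via the multiplicativity of $\Phi_h$.
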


\begin{proof}
Let $p_x(x,y) = x$ and $p_y(x,y) = y$. Then $\Phi_h(p_x) = p_x \circ h^{-1} = \Re h^{-1}$ which lies in $AC(\sigma_2)$ by Lemma~\ref{ReIm}. Similarly $\Phi_h(p_y) \in AC(\sigma_2)$ and so the algebraic properties of $\Phi_h$ ensure that $\Phi_h(p) \in AC(\sigma_2)$ for all $p \in \Pt_2(\sigma_1)$.
\end{proof}

\begin{theorem}\label{ac-bv-extend} Suppose that $AC(\sigma_1) \simeq AC(\sigma_2)$. Then $BV(\sigma_1) \simeq BV(\sigma_2)$.
\end{theorem}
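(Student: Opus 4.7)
The plan is to invoke the Doust--Leinert classification: the hypothesized isomorphism has the form $\Phi_h(f) = f \circ h^{-1}$ for some homeomorphism $h : \sigma_1 \to \sigma_2$, and by Lemma~\ref{h-ac} both $h \in AC(\sigma_1)$ and $h^{-1} \in AC(\sigma_2)$. Because $\Phi_h$ is automatically an algebra isomorphism of the algebras of all complex-valued functions on $\sigma_1$ and $\sigma_2$, Theorem~\ref{h-vf-cond} reduces the whole problem to showing $\vf(h) < \infty$ and $\vf(h^{-1}) < \infty$. The two cases are symmetric, so I will only discuss $\vf(h) < \infty$.

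I will argue by contradiction, following the pattern of the proof of Lemma~\ref{vf(h)-lem}. Assume $\vf(h) = \infty$. For any $K > 0$, choose a list $S = [\vecx_0, \dots, \vecx_n]$ in $\sigma_1$ with $m := \vf(S)$ and $m / \vf(h(S)) > K$, and let $\ell$ be a line realising $\vf(S, \ell) = m$. The proof of Lemma~\ref{vf(h)-lem} already produces one of the two closed half-planes $H$ bounded by $\ell$ with $\cvar(\chi_H, S) \ge (m-1)/2$, while $\|\chi_H\|_{BV(\sigma_1)} \le 2$ by \cite[Proposition~3.20]{AD1}. That argument cannot be applied directly here because $\chi_H$ lies in $BV(\sigma_1)$ but not in $AC(\sigma_1)$, and no control on $\Phi_h$ as a $BV$-map is yet available. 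So I will replace $\chi_H$ by a smooth, hence absolutely continuous, approximation $p_\epsilon(\vecx) = \phi_\epsilon(d(\vecx))$, where $d$ is the signed distance to $\ell$ and $\phi_\epsilon : \R \to [0,1]$ is a smooth monotone function coinciding with $\chi_H$ (as a function of $d$) outside a one-sided interval of length $\epsilon$ at $0$, arranged so that $\phi_\epsilon(0)$ matches $\chi_H$ on $\ell$. For $\epsilon < \min\{|d(\vecx_j)| : \vecx_j \in S,\ \vecx_j \notin \ell\}$ we have $p_\epsilon|_S = \chi_H|_S$, and hence $\cvar(p_\epsilon, S) = \cvar(\chi_H, S) \ge (m-1)/2$; moreover $p_\epsilon \in AC(\sigma_1)$ since it is the restriction of a $C^\infty$ function on $\R^2$.

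The main obstacle is then a uniform bound $\|p_\epsilon\|_{BV(\sigma_1)} \le C$ independent of $\epsilon$ and $\ell$. I expect this to fall out of a layer-cake decomposition. Since $\phi_\epsilon$ is monotone with values in $[0,1]$, one may write $\phi_\epsilon(t) = \int_\R \chi_{[s,\infty)}(t)\,d\mu(s)$ with $d\mu = d\phi_\epsilon$ a Borel probability measure on $\R$, so
\[
p_\epsilon(\vecx) \;=\; \int_\R \chi_{H_s}(\vecx)\, d\mu(s),
\]
where each $H_s = \{d \ge s\}$ is a half-plane. By \cite[Proposition~3.20]{AD1}, $\var(\chi_{H_s}, \sigma_1) \le 1$ for every $s$. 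Pulling the integral outside the finite sum defining $\cvar(p_\epsilon, T)$ and using the triangle inequality gives $\cvar(p_\epsilon, T) \le \int \cvar(\chi_{H_s}, T)\,d\mu(s) \le \vf(T)\int \var(\chi_{H_s}, \sigma_1)\,d\mu(s) \le \vf(T)$ for every list $T$, whence $\var(p_\epsilon, \sigma_1) \le 1$ and $\|p_\epsilon\|_{BV(\sigma_1)} \le 2$.

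With the uniform bound in hand, set $M = \|\Phi_h|_{AC(\sigma_1)}\| < \infty$. Then on the one hand $\|\Phi_h(p_\epsilon)\|_{BV(\sigma_2)} \le 2M$, while on the other hand, using the list $h(S)$ in the definition of $\var$,
\[
\|\Phi_h(p_\epsilon)\|_{BV(\sigma_2)} \;\ge\; \frac{\cvar(p_\epsilon, S)}{\vf(h(S))} \;\ge\; \frac{m-1}{2\,\vf(h(S))} \;>\; \frac{K-1}{2}.
\]
Since $K$ was arbitrary, this contradicts the finiteness of $M$, so $\vf(h) < \infty$; the symmetric argument gives $\vf(h^{-1}) < \infty$, and Theorem~\ref{h-vf-cond} then furnishes the desired $BV$-isomorphism.
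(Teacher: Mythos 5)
Your proposal is correct and follows essentially the same route as the paper: reduce to showing $\vf(h)<\infty$ via Theorem~\ref{h-vf-cond}, argue by contradiction with a list $S$ and line $\ell$ realising a large variation factor, and replace $\chi_H$ by an absolutely continuous function agreeing with it on the finite set $S$ whose $BV$ norm is bounded independently of the approximation parameter. The only difference is cosmetic: the paper uses a piecewise planar cutoff $f_{H,\delta}$ and leaves the uniform bound $\norm{f_{H,\delta}}_{BV(\sigma_1)}\le 2$ implicit, whereas your layer-cake decomposition of $p_\epsilon$ into half-plane characteristic functions makes that bound explicit.
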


\begin{proof} Let $\Phi: AC(\sigma_1) \to AC(\sigma_2)$ be an algebra isomorphism from $AC(\sigma_1)$ to $AC(\sigma_2)$ and let $K = \norm{\Phi}$. Then \cite[Theorem~2.8]{DL1} implies that there exists a homeomorphism $h: \sigma_1 \to \sigma_2$ such that $\Phi(f) = \Phi_h(f) = f \circ h^{-1}$ for all $f \in AC(\sigma_1)$.

Suppose that $\Phi_h$ is  not bounded as a map on $BV(\sigma_1)$. By
Lemma~\ref{bv-boundedness} this means that $\vf(h) = \infty$. We can therefore choose a list of points $S \subseteq \sigma_1$ so that $\vf(S) > 8K \vf(h(S))$. As in the proof of Lemma~\ref{vf(h)-lem}, we can choose a line $\ell$ such that $\vf(S,\ell) = \vf(S) = m$ say.  This line forms the boundary of a closed
half-plane $H$ whose characteristic function $\chi_H$ satisfies
$\cvar(\chi_H,S) \ge \frac{m-1}{2}$.

As in Section~9 of \cite{ASD4} we can choose a piecewise planar function $f_{H,\delta}\in AC(\sigma_1)$ of norm at most $2$ which agrees with $\chi_H$ except on a small strip of width $\delta$ along the boundary of $H$ (see Figure~\ref{f_H,delta-pic}).
Since $S$ is a finite set, if $\delta$ is chosen small enough, then $f_{H,\delta}$ and $\chi_H$ agree on the points in $S$ and hence
  \[ \cvar(\Phi_h(f_{H,\delta}),h(S)) = \cvar(f_{H,\delta},S) = \cvar(\chi_H,S) \ge \frac{m-1}{2}. \]
Thus
   \begin{align*}
    \var(\Phi_h(f_{H,\delta}),\sigma_2)
      &\ge \frac{\cvar(\Phi_h(f_{H,\delta}),h(S))}{\vf(h(S))}  \\
     & > \frac{m-1}{2} \cdot \frac{8K}{\vf(S)} \\
     &= 4 \frac{(m-1)K}{m} \ge 2K.
  \end{align*}
We then have that
  \[ \norm{\Phi_h(f_{H,\delta})}_{BV(\sigma_2)} > 2K \ge \norm{\Phi_h} \, \norm{f_{H,\delta}}_{BV(\sigma_1)} \]
which is impossible. Therefore $\vf(h)$ must be finite, and $\Phi_h$ must be bounded on $BV(\sigma_2)$.

An analogous argument using the relationship between the boundedness of $\Phi_h^{-1}$ and the finiteness of $\vf(h^{-1})$ completes the proof.
\end{proof}

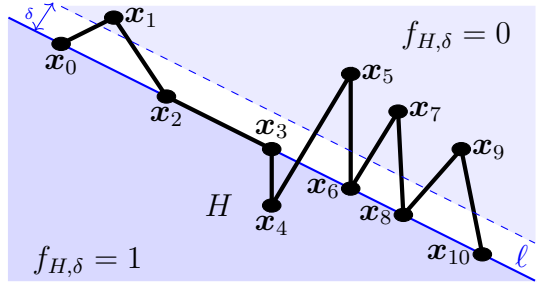
\begin{figure}[ht!]
\begin{center}

\begin{tikzpicture}[yscale=2.5,xscale=3.5]
  \draw[fill,blue!15] (-1,0.7) -- (-1,-0.7) -- (1,-0.7) -- (-1,0.7);
  \draw[fill,blue!8] (-0.8,0.76) -- (1,-0.5) -- (1,0.76) -- (-0.8,0.76);
  \draw[thick,blue] (-1,0.7) -- (1,-0.7);
  \draw[ultra thick, black] (-0.8,0.56) -- (-0.6,0.7) -- (-0.4,0.28) -- (0,0) -- (0,-0.3) -- (0.3,0.4) -- (0.3,-0.21) -- (0.48,0.2) -- (0.5,-0.35) -- (0.72,0) -- (0.8,-0.56);
  \draw[blue,dashed] (-0.8,0.76) -- (1,-0.5);
  \draw[<->,blue] (-0.9,0.63) -- (-0.83,0.78);
  \draw[blue] (-0.98,0.72) node[right] {\tiny{$\delta$}};
  \draw (-0.7,-0.6) node {$f_{H,\delta} = 1$};
  \draw (0.7,0.6) node {$f_{H,\delta} = 0$};

   \draw[fill,black] (-0.8,0.56) circle (1pt) node[below] {$\vecx_0$};
   \draw[fill,black] (-0.6,0.7) circle (1pt) node[right] {$\vecx_1$};
   \draw[fill,black] (-0.4,0.28) circle (1pt) node[below] {$\vecx_2$};
   \draw[fill,black] (0,0) circle (1pt) node[above] {$\vecx_3$};
   \draw[fill,black] (0,-0.3) circle (1pt) node[below] {$\vecx_4$};
   \draw[fill,black] (0.3,0.4) circle (1pt) node[right] {$\vecx_5$};
   \draw[fill,black] (0.3,-0.21) circle (1pt) node[left] {$\vecx_6$};
   \draw[fill,black] (0.48,0.2) circle (1pt) node[right] {$\vecx_7$};
   \draw[fill,black] (0.5,-0.35) circle (1pt) node[left] {$\vecx_8$};
   \draw[fill,black] (0.72,0.0) circle (1pt) node[right] {$\vecx_9$};
   \draw[fill,black] (0.8,-0.56) circle (1pt) node[left] {$\vecx_{10}$};

   \draw[blue] (0.95,-0.57) node {$\ell$};
   \draw (-0.2,-0.3) node {$H$};
\end{tikzpicture}
\caption{Choosing the  function $f_{H,\delta}\in AC(\sigma_1)$ in the proof of Theorem~\ref{ac-bv-extend}. For $\delta$ small enough, $f_{H,\delta}(\vecx_j) = \chi_H(\vecx_j)$ for all $j$.}\label{f_H,delta-pic}
\end{center}
\end{figure}

Combining the above results gives the following characterization.

\begin{theorem}
Suppose that $\sigma_1, \sigma_2 \subseteq \mathbb{C}$ and that $h: \sigma_1 \to \sigma_2$ is a homeomorphism. Then the following are equivalent.
\begin{enumerate}
  \item $\Phi_h$ is an isomorphism from $AC(\sigma_1)$ to $AC(\sigma_2)$.
  \item $\Phi_h$ is an isomorphism from $BV(\sigma_1)$ to $BV(\sigma_2)$, $h \in AC(\sigma_1)$ and $h^{-1} \in AC(\sigma_2)$
\end{enumerate}
\end{theorem}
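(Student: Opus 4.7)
The plan is to reduce each implication to results already proved in the paper; no new analysis is required.

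For $(1)\Rightarrow(2)$, I would observe that the proof of Theorem~\ref{ac-bv-extend} actually establishes more than its stated conclusion: starting from the assumption that $\Phi_h$ is an $AC$-isomorphism, it shows $\vf(h)<\infty$ and, by the symmetric argument applied to $\Phi_h^{-1}$, $\vf(h^{-1})<\infty$. Theorem~\ref{h-vf-cond} then upgrades this to the statement that $\Phi_h:BV(\sigma_1)\to BV(\sigma_2)$ is itself a Banach algebra isomorphism. The remaining two assertions $h\in AC(\sigma_1)$ and $h^{-1}\in AC(\sigma_2)$ are exactly the content of Lemma~\ref{h-ac}.

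For $(2)\Rightarrow(1)$, I would combine Lemma~\ref{intoAC} with the density of $\Pt_2(\sigma_1)$ in $AC(\sigma_1)$. Since $h^{-1}\in AC(\sigma_2)$, the lemma gives $\Phi_h(\Pt_2(\sigma_1))\subseteq AC(\sigma_2)$. Because $\Phi_h$ is continuous on $BV(\sigma_1)$ (by hypothesis) and $AC(\sigma_2)$ is closed in $BV(\sigma_2)$, passage to the closure yields
\[
 \Phi_h(AC(\sigma_1)) = \Phi_h\bigl(\overline{\Pt_2(\sigma_1)}\bigr) \subseteq \overline{AC(\sigma_2)} = AC(\sigma_2).
\]
Running the same argument with $h$ and $h^{-1}$ interchanged (using now the hypothesis $h\in AC(\sigma_1)$) gives $\Phi_{h^{-1}}(AC(\sigma_2))\subseteq AC(\sigma_1)$, i.e.\ the reverse inclusion $AC(\sigma_2)\subseteq \Phi_h(AC(\sigma_1))$. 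Thus $\Phi_h$ maps $AC(\sigma_1)$ bijectively onto $AC(\sigma_2)$; since the $AC$-norm is just the restriction of the $BV$-norm, boundedness of the restriction and of its inverse is inherited automatically from the $BV$-level isomorphism, making the restriction an algebra isomorphism.

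The only step worth flagging is the one I described in the first implication: the \emph{statement} of Theorem~\ref{ac-bv-extend} records only that \textbf{some} isomorphism of $BV$-spaces exists, whereas to prove $(1)\Rightarrow(2)$ we need the specific map $\Phi_h$ from the hypothesis to be that isomorphism. This is not a genuine obstacle, but it requires appealing to what the proof of Theorem~\ref{ac-bv-extend} actually shows rather than merely to its conclusion.
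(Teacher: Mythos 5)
Your proposal is correct and follows essentially the same route as the paper: the forward implication invokes Theorem~\ref{ac-bv-extend} (whose proof indeed shows that the specific map $\Phi_h$ is $BV$-bounded, the point you rightly flag) together with Lemma~\ref{h-ac}, and the reverse implication is exactly the paper's combination of Lemma~\ref{intoAC} with density of $\Pt_2(\sigma_1)$, $BV$-continuity of $\Phi_h$, and closedness of $AC(\sigma_2)$ in $BV(\sigma_2)$, applied symmetrically to $\Phi_h^{-1}$. Your explicit remark about needing the proof rather than merely the statement of Theorem~\ref{ac-bv-extend} is a fair and accurate reading of what the paper itself implicitly relies on.
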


\begin{proof} ($\implies$) Suppose that $\Phi_h$ is an isomorphism from $AC(\sigma_1)$ to $AC(\sigma_2)$. By the previous theorem, $\Phi_h$ extends to an isomorphism from $BV(\sigma_1)$ to $BV(\sigma_2)$. The facts about $h$ and $h^{-1}$ follow from Lemma~\ref{h-ac}.

($\Leftarrow$) Suppose that (2) holds. By Lemma~\ref{intoAC}, $\Phi_h(p) \in AC(\sigma_2)$ for all $p \in \Pt_2(\sigma_1)$. Since $\Phi_h$ is $BV$ norm bounded this implies that $\Phi_h(f) \in AC(\sigma_2)$ for all $f \in AC(\sigma_1)$. Similarly $\Phi_h^{-1}$ maps $AC(\sigma_2)$ into $AC(\sigma_1)$ and hence $\Phi_h$ is an isomorphism of the spaces of absolutely continuous functions.
\end{proof}

\section{An application}

Berkson and Gillespie \cite{BG} introduced the class of trigonometrically well-bounded operators as a type of Banach space analogue of unitary operators. Trigonometrically well-bounded operators have been used in developing various aspects of operator-valued harmonic analysis (see for example \cite{BG2}). It was shown in \cite{AD4} that for reflexive Banach spaces, these operators are precisely the $AC(\mT)$ operators, that is, operators which possess an $AC(\mT)$ functional calculus. (On nonreflexive spaces, the definition of a trigonometrically well-bounded operator  requires that this functional calculus is weakly compact.) Berkson and Gillespie showed that if $T$ is trigonometrically well-bounded then it admits an integral representation with respect to a suitable family of projections, and this can be used to extend the $AC(\mT)$ functional calculus to a $BV(\mT)$ functional calculus. An open problem is whether every $AC(\sigma)$ operator on a reflexive Banach space admits a $BV(\sigma)$ functional calculus.

It is not true that $AC(\sigma)$ is isomorphic to $AC(\mT)$ for every set $\sigma \subseteq \mC$ which is homeomorphic to $\mT$. However one can obtain a Gelfand--Kolmogorov type theorem if one restricts the class of sets somewhat. In \cite{ASD4} the authors introduce the family $PIC$ of `polygonally inscribed curves'. These are connected sets which can be written as a finite union of smooth convex curves, subject to some mild conditions about how these curves meet. $PIC$ contains all the linear graph sets considered in $\cite{ASD}$. We omit the full definition here, but some examples of $PIC$ sets are given in Figure~\ref{PIC-sets}.

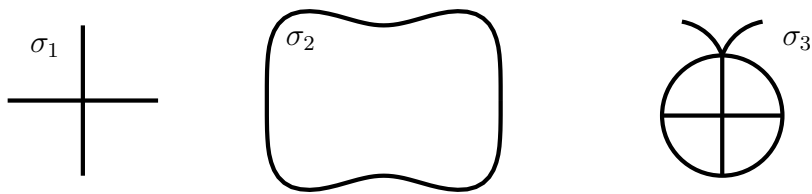
\begin{figure}[ht!]
\begin{center}
\begin{tikzpicture}

\draw[ultra thick, black] (-4,-1) -- (-4,1);
\draw[ultra thick, black] (-5,0) -- (-3,0);
\draw (-4.5,0.7) node {$\sigma_1$};

\draw[ultra thick, black] (1.554, 0.) -- (1.554, 0.09776) -- (1.554, .1962) -- (1.553, .2963) -- (1.551, .3982) -- (1.546, .5024) -- (1.538, .6088) -- (1.523, .7166) -- (1.497, .8229) -- (1.458, .9253) -- (1.401, 1.018) -- (1.323, 1.094) -- (1.225, 1.150) -- (1.109, 1.181) -- (.9835, 1.189) -- (.8564, 1.179) -- (.7335, 1.156) -- (.6206, 1.129) -- (.5178, 1.100) -- (.4252, 1.074) -- (.3414, 1.051) -- (.2651, 1.033) -- (.1945, 1.018) -- (.1273, 1.008) -- (0.06304, 1.002) -- (0, 1.000) -- (-0.06304, 1.002) -- (-.1273, 1.008) -- (-.1941, 1.018) -- (-.2651, 1.033) -- (-.3414, 1.051) -- (-.4252, 1.074) -- (-.5178, 1.100) -- (-.6206, 1.129) -- (-.7335, 1.156) -- (-.8564, 1.179) -- (-.9835, 1.189) -- (-1.109, 1.181) -- (-1.225, 1.150) -- (-1.323, 1.095) -- (-1.401, 1.018) -- (-1.458, .9255) -- (-1.497, .8227) -- (-1.523, .7163) -- (-1.538, .6083) -- (-1.546, .5031) -- (-1.551, .3985) -- (-1.553, .2964) -- (-1.554, .1962) -- (-1.554, 0.09739) -- (-1.554, -0.0006329) -- (-1.554, -0.9711e-1) -- (-1.554, -.1959) -- (-1.553, -.2961) -- (-1.551, -.3983) -- (-1.546, -.5029) -- (-1.538, -.6095) -- (-1.523, -.7159) -- (-1.497, -.8224) -- (-1.458, -.9253) -- (-1.401, -1.018) -- (-1.323, -1.095) -- (-1.225, -1.151) -- (-1.109, -1.180) -- (-.9835, -1.189) -- (-.8564, -1.179) -- (-.7335, -1.156) -- (-.6206, -1.129) -- (-.5178, -1.100) -- (-.4252, -1.074) -- (-.3414, -1.051) -- (-.2651, -1.033) -- (-.1941, -1.018) -- (-.1273, -1.008) -- (-0.6304e-1, -1.002) -- (0, -1.000) -- (0.06304, -1.002) -- (.1273, -1.008) -- (.1941, -1.018) -- (.2651, -1.032) -- (.3414, -1.051) -- (.4252, -1.074) -- (.5178, -1.100) -- (.6206, -1.129) -- (.7335, -1.156) -- (.8564, -1.178) -- (.9835, -1.188) -- (1.109, -1.181) -- (1.225, -1.150) -- (1.323, -1.095) -- (1.401, -1.018) -- (1.458, -.9250) -- (1.497, -.8221) -- (1.523, -.7171) -- (1.538, -.6092) -- (1.546, -.5024) -- (1.551, -.3980) -- (1.553, -.2958) -- (1.554, -.1956) -- (1.554, -0.09831) -- (1.554, -0.0002880);
\draw (-1.1,0.8) node {$\sigma_2$};

\draw[ultra thick,black] (4.5,-0.2) circle (0.8cm);
\draw[ultra thick,black] (4.5,-1) -- (4.5,0.6);
\draw[ultra thick,black] (3.7,-0.2) -- (5.3,-0.2);
\draw[ultra thick, black] (4.5,0.6) arc(20:80:0.7);
\draw[ultra thick, black] (4.5,0.6) arc(160:100:0.7);
\draw (5.5,0.8) node {$\sigma_3$};

\end{tikzpicture}
\end{center}
\caption{Three polygonally inscribed curves}\label{PIC-sets}

\end{figure}

\begin{theorem} Suppose that $T$ is an $AC(\sigma)$ operator on a reflexive Banach space $X$ with $\sigma \in PIC$. If $\sigma$ is homeomorphic to $\mT$ then $T$ admits a $BV(\sigma)$ functional calculus.
\end{theorem}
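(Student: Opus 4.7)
The plan is to transport the classical Berkson--Gillespie $BV(\mT)$ functional calculus back to $BV(\sigma)$ using the isomorphism machinery developed earlier in the paper. First, since $\sigma \in PIC$ is homeomorphic to $\mT$ (which is itself trivially in $PIC$), the Gelfand--Kolmogorov type theorem for $PIC$ sets from \cite{ASD4} supplies a homeomorphism $h: \sigma \to \mT$ such that $\Phi_h(f) = f \circ h^{-1}$ is a Banach algebra isomorphism from $AC(\sigma)$ onto $AC(\mT)$. By Theorem~\ref{ac-bv-extend}, this $\Phi_h$ extends to a Banach algebra isomorphism (still denoted $\Phi_h$) from $BV(\sigma)$ onto $BV(\mT)$.

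Next, let $\Psi: AC(\sigma) \to B(X)$ denote the given $AC(\sigma)$ functional calculus for $T$, so that $\Psi$ is a bounded algebra homomorphism with $\Psi(\lambda \mapsto \lambda) = T$. The composition $\Psi' = \Psi \circ \Phi_h^{-1}: AC(\mT) \to B(X)$ is then a bounded algebra homomorphism, and if $w: \mT \to \mC$ denotes the identity function, the operator $S := \Psi'(w) = \Psi(h)$ admits an $AC(\mT)$ functional calculus (delivered by $\Psi'$). Since $X$ is reflexive, the identification in \cite{AD4} guarantees that $S$ is trigonometrically well-bounded in the sense of Berkson and Gillespie, so by \cite{BG} the homomorphism $\Psi'$ extends to a bounded algebra homomorphism $\Psi'_{BV}: BV(\mT) \to B(X)$.

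Finally, I would define $\Psi_{BV}: BV(\sigma) \to B(X)$ by $\Psi_{BV} = \Psi'_{BV} \circ \Phi_h$, which is a bounded algebra homomorphism as a composition of such maps. For any $f \in AC(\sigma)$ one computes
\[
\Psi_{BV}(f) = \Psi'_{BV}(\Phi_h(f)) = \Psi'(\Phi_h(f)) = \Psi(\Phi_h^{-1}(\Phi_h(f))) = \Psi(f),
\]
so $\Psi_{BV}$ extends $\Psi$, and in particular sends the identity function on $\sigma$ to $T$. Hence $\Psi_{BV}$ is the required $BV(\sigma)$ functional calculus for $T$. The main point where care is needed is verifying that the $PIC$ Gelfand--Kolmogorov theorem of \cite{ASD4} applies to the pair $(\sigma, \mT)$ in the stated form; once that is in hand, Theorem~\ref{ac-bv-extend} does the heavy lifting of upgrading the $AC$-isomorphism to a $BV$-isomorphism, and the remaining pieces fit together formally.
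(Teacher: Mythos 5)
Your proposal is correct and follows essentially the same route as the paper's own proof: obtain the $AC(\sigma)\to AC(\mT)$ isomorphism from the $PIC$ theory, transfer the functional calculus to an operator with an $AC(\mT)$ calculus, invoke trigonometric well-boundedness to extend to $BV(\mT)$, and pull back via the $BV$-isomorphism supplied by Theorem~\ref{ac-bv-extend}. The only cosmetic difference is that you explicitly verify the extension property $\Psi_{BV}|_{AC(\sigma)} = \Psi$, which the paper leaves implicit.
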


\begin{proof}
Let $\Psi_T: AC(\sigma) \to B(X)$ denote the functional calculus homomorphism for $T$. Since $\sigma$ and $\mT$ are homeomorphic sets in $PIC$, it follows from \cite[Theorem~7]{ASD4} that there exists an isomorphism $\Phi: AC(\sigma) \to AC(\mT)$. Define $\Gamma: AC(\mT) \to B(X)$ by $\Gamma = \Psi_T \circ \Phi^{-1}$. Let $e(z) = z$ be the identity function considered as an element of $AC(\mT)$ and let $U = \Gamma(e)$. Then $\Gamma$ determines an $AC(\mT)$ functional calculus for $U$, and so $U$ is trigonometrically well-bounded. It follows that $\Gamma$ extends to a bounded algebra homomorphism ${\hat \Gamma}: BV(\mT) \to B(X)$. By Theorem~\ref{ac-bv-extend}, $\Phi$ lifts to an isomorphism ${\hat \Phi}: BV(\sigma) \to BV(\mT)$. Let ${\hat \Psi}_T: BV(\sigma) \to B(X)$ be ${\hat \Psi}_T = {\hat \Gamma} \circ {\hat \Phi}$. Then ${\hat \Psi}_T$ defines a $BV(\sigma)$ functional calculus for $T$.
\end{proof}

\section{Appendix: The definition of variation}

The definition of $\var(f,\sigma)$ has appeared in various guises since it was introduced in \cite{AD1}, and it is reasonable to ask whether all the versions used are equal. The aim of this appendix is to confirm that the various forms of the definition are indeed consistent.

In its original form the variation was defined to be
  \[ \var(f,\sigma) = \sup_{\gamma \in \Gamma} \frac{\cvar(f,\gamma)}{\vf(\gamma)} \]
where $\Gamma$ was the space of continuous curves in the plane parametrized by $[0,1]$. Even in \cite{AD1} it was noted that working with the space $\Gamma$ was unwieldy and that it was sufficient to consider the space $\Gamma_L$ of piecewise linear curves. Indeed the proofs of the properties of variation in \cite{AD1} all utilized $\Gamma_L$ in place of $\Gamma$. Each $\gamma \in \Gamma_L$ can be specified by giving an ordered list $S = [\vecx_0,\vecx_1,\dots,\vecx_n]$ of points so that $\gamma$ is made up of the line segments $\ls[\vecx_{j},\vecx_{j+1}]$, $j = 0,\dots,n-1$.

The main issue here is the definition of $\vf(\gamma)$, which heuristically aims to count the maximum number of times that $\gamma$ crosses any line. The challenge was to make sense of what counts as a crossing.
The original definition in \cite{AD1} involved entry points of the curve $\gamma$ on a line $\ell$ in the plane. If a continuous curve $\gamma$ is parametrized as $\gamma(t)$, $0 \le t \le 1$, then $t$ is an \textbf{entry point} of $\gamma$ on $\ell$ if either:
\begin{enumerate}
  \item[(1)] $t= 0$ and $\gamma(0) \in \ell$, that is the curve starts on $\ell$, or
  \item[(2)] $0 < t \le 1$, $\gamma(t) \in \ell$ and for all $u \in (0,t)$ there exists $s \in (u,t)$ such that $\gamma(s) \not\in \ell$.
\end{enumerate}
Then $\vf(\gamma,\ell)$ was defined to be the number of entry points of $\gamma$ on $\ell$ and $\vf(\gamma)$ was defined to be the maximum value of $\vf(\gamma,\ell)$ over all lines $\ell$ in the plane.

Suppose that $\ell$ is a line and that $\gamma = \gamma_S$ is a piecewise linear curve determined by $S = [\vecx_0,\dots,\vecx_n]$, so that $\vecx_j = \gamma(t_j)$ with $0 = t_0 < t_1 < \dots < t_n = 1$. The core observation is that for each $j \in \{0,1,\dots,n-1\}$ there can be at most one entry point of $\gamma$ on $\ell$ in $[t_{j},t_{j+1}]$. To see this, note that otherwise there would be two points in $\seg(j,j+1)$ on $\ell$ and hence $\seg(j,j+1) \subseteq \ell$. But by (2) this would mean that no $t \in (t_j,t_{j+1}]$ is an entry point.

This led to the concept of a crossing segment, introduced in \cite{DL1,DL2}. There, a segment $\ls[\vecx_j,\vecx_{j+1}]$ was called a crossing segment of $S$ on $\ell$ if either $\gamma$ has an entry point in $[t_{j},t_{j+1})$, or, to deal with the final endpoint, if $j = n-1$ and $t_n = 1$ is an entry point. Note that if $j = n-1$ and $t_n$ is an entry point, then $\gamma$ does not have an entry point in $[t_{n-1},t_n)$.
 Thus the number of crossing segments is equal to the number of entry points for $\gamma$.

The aim of introducing crossing segments was to avoid dealing with parameterizations entirely and to express (1) and (2) in terms of the points in $S$.
There are a number of cases:
\begin{itemize}
  \item[(a)] If $t \in (t_{j},t_{j+1})$ is an entry point, then $\vecx_{j}$ and $\vecx_{j+1}$ must lie on opposite sides of $\ell$.
  \item[(b)] Otherwise $t_{j}$ is an entry point for some $j$.
  \begin{enumerate}
    \item[(b1)] If $j = 0$ this means that $\vecx_0 \in \ell$.
    \item[(b2)] If $j > 0$ then $x_j \in \ell$ but $x_{j-1} \not\in \ell$ (or else we would have $\seg(j-1,j) \subseteq \ell$).
   \end{enumerate}
\end{itemize}

Encoding this in terms of the endpoints of the segments of $S$ gave the following definition from \cite{DL1}. (Again it is worth noting that if $t_0$ is an entry point then there is no entry point in $(t_0,t_1]$.)

\begin{definition}\label{olddef}
The $j$th segment $s_j = \ls[\vecx_j,\vecx_{j+1}]$ is a \textbf{crossing segment} of $S$ on the line $\ell$ if any of the following hold.
  \begin{itemize}
  \item[(I)] $\vecx_{j}$ and $\vecx_{j+1}$ lie on (strictly) opposite sides of $\ell$.
  \item[(II)] $j=0$ and $\vecx_{j} \in \ell$.
  \item[(III)] $j > 0$, $\vecx_j \in \ell$ and $\vecx_{j-1} \not\in \ell$.
  \item[(IV)] $j = n-1$, $\vecx_j \not\in \ell$ and $\vecx_{j+1} \in \ell$.
  \end{itemize}
\end{definition}

Condition (I) corresponds to condition (a) above. Condition (II) is (b1) when $j = 0$. Condition (III) is (b2) for $0 < j < n$. Condition (IV) covers the case when the final point (that is $t_{n}$) is an entry point.

More recently it was realized that it was in fact simpler to count the cases where either $t_0$ is an entry point, or else where there is an entry point in $(t_j,t_{j+1}]$. This leads to the more elegant definition used in \cite{ASD4}.

\begin{definition}\label{newdef}
The $j$th segment $s_j = \ls[\vecx_j,\vecx_{j+1}]$ is a \textbf{crossing segment} of $S$ on the line $\ell$ if any of the following hold.
  \begin{itemize}
  \item[(i)] $\vecx_{j}$ and $\vecx_{j+1}$ lie on (strictly) opposite sides of $\ell$.
  \item[(ii)] $j=0$ and $\vecx_{j} \in \ell$.
  \item[(iii)] $\vecx_{j} \not\in \ell$ and $\vecx_{j+1}\in  \ell$.
  \end{itemize}
\end{definition}

Note that Definition~\ref{newdef} may label different segments as crossing segments as compared to Definition~\ref{olddef}, but it will always produce the same number of crossing segments. This is best illustrated by an example.

\begin{example} Consider the line $\ell$ and the ordered list $S = [\vecx_0,\vecx_1,\dots,\vecx_9]$ shown in Figure~\ref{crossings}. The locations of the five entry points of the curve $\gamma$ determined by $S$ are marked in blue. Definition~\ref{olddef} will count five crossing segments: $\seg(0,1)$ (rule (II)), $\seg(2,3)$ (rule (I)), $\seg(4,5)$ (rule (III)), $\seg(7,8)$ (rule (I)) and $\seg(8,9)$ (rule (IV)). Definition~\ref{newdef} also counts five crossing segments: $\seg(0,1)$ (rule (ii)), $\seg(2,3)$ (rule (i)), $\seg(3,4)$ (rule (iii)), $\seg(7,8)$ (rule (i)) and $\seg(8,9)$ (rule (iii)).

\begin{figure}[!ht]
\begin{center}
\begin{tikzpicture}[scale=1]
 \draw[thick,green] (-1,0) --(11,0);
 \draw[ultra thick, red] (1,0) -- (2,0) -- (3,1) -- (4,-1) -- (5,0) -- (6,0) -- (8,0) -- (7,1) -- (7,-1) -- (9,0);
\draw[blue] (1,0) node[circle, draw, fill=blue!50,inner sep=0pt, minimum width=6pt] {};
\draw (1,0) node[above] {$\vecx_0$};
\draw[red] (2,0) node[circle, draw, fill=black!50,inner sep=0pt, minimum width=4pt] {};
\draw (2,0) node[below] {$\vecx_1$};
\draw[red] (3,1) node[circle, draw, fill=black!50,inner sep=0pt, minimum width=4pt] {};
\draw (3,1) node[above] {$\vecx_2$};
\draw[blue] (3.5,0) node[circle, draw, fill=blue!50,inner sep=0pt, minimum width=6pt] {};
\draw (3.65,0) node[above] {$\vecv$};
\draw[red] (4,-1) node[circle, draw, fill=black!50,inner sep=0pt, minimum width=4pt] {};
\draw (4,-1) node[below] {$\vecx_3$};
\draw[blue] (5,0) node[circle, draw, fill=blue!50,inner sep=0pt, minimum width=6pt] {};
\draw (5,0) node[above] {$\vecx_4$};
\draw[red] (6,0) node[circle, draw, fill=black!50,inner sep=0pt, minimum width=4pt] {};
\draw (6,0) node[above] {$\vecx_5$};
\draw[red] (8,0) node[circle, draw, fill=black!50,inner sep=0pt, minimum width=4pt] {};
\draw (8.1,0) node[above] {$\vecx_6$};
\draw[red] (7,1) node[circle, draw, fill=black!50,inner sep=0pt, minimum width=4pt] {};
\draw (7,1) node[above] {$\vecx_7$};
\draw[blue] (7,0) node[circle, draw, fill=blue!50,inner sep=0pt, minimum width=6pt] {};
\draw (6.8,0) node[below] {$w$};
\draw[red] (7,-1) node[circle, draw, fill=black!50,inner sep=0pt, minimum width=4pt] {};
\draw (7,-1) node[below] {$\vecx_8$};
\draw[blue] (9,0) node[circle, draw, fill=blue!50,inner sep=0pt, minimum width=6pt] {};
\draw (9,0) node[above] {$\vecx_9$};

 \draw (10.5,0) node[below] {$\ell$};

\end{tikzpicture}
\caption{Crossing segments of $S = [\vecx_0,\vecx_1,\dots,\vecx_n]$ on $\ell$.}\label{crossings}
\end{center}
\end{figure}

\end{example}

This means that the value of $\vf(S,\ell)$, the number of crossing segments of $S$ on $\ell$ and of the variation factor of $S$, $\vf(S) = \max_\ell \vf(S,\ell)$, are consistent between these two definitions (and the original definition in \cite{AD1}). Consequently, the definition of $\var(f,\sigma)$ is unchanged if one uses either definition for crossing segments.

An alternative way to calculate $\vf(S,\ell)$ is to split the curve $\gamma_S$ into connected sections which are either on or off the line $\ell$. (Note that connected here refers to the parameterizarion of $\gamma_S$. This is different to looking at the connected components of $\gamma_S \cap \ell$.) In Figure~\ref{crossings} the sections which are on the line are $\seg(0,1)$, $\{\vecv\}$, $\seg(4,6)$, $\{w\}$ and $\{\vecx_9\}$.

\begin{proposition}\label{conn-sect} $\vf(S,\ell)$ is the number of connected sections of $\gamma_S$ which are on $\ell$.
\end{proposition}

\begin{proof} Fix a parameterisation of $\gamma_S$ by $[0,1]$. Then there exist points
  \[ 0 \le b_1 \le e_1 < b_2 \le e_2 < \dots < b_m \le e_m \le 1 \]
so that the $i$th connected section of $\gamma_S$ that lies on $\ell$ goes from $\gamma_S(b_i)$ to $\gamma_S(e_i)$.
For each $i$, $b_i$ is an entry point of $\gamma_S$ on $\ell$. Conversely every entry point is one of the $b_i$s. Thus the number of connected sections is equal to the number of entry points, and from the earlier remarks, this is equal to $\vf(S,\ell)$.
\end{proof}

(We note that a different definition again was used in \cite{ASD}, where condition (IV) was erroneously omitted from Definition~6.1. This causes no issue in that paper as the only crossing segments that were needed there were ones which satisfied condition (I).)

It is worth noting that both Definition~\ref{olddef} and Definition~\ref{newdef} are dependent on the direction in which the curve $\gamma_S$ is traversed. Let $S_r =  [\vecx_n,\vecx_{n-1},\dots,\vecx_0]$ denote the points of $S$ listed in the reverse order. For the example in Figure~\ref{crossings}, under Definition~\ref{newdef}, $\seg(7,6)$ is a crossing segment for $S_r$ on $\ell$ while $\seg(6,7)$ is not one for $S$. By Proposition~\ref{conn-sect} however, it is clear that one always has $\vf(S,\ell) = \vf(S_r,\ell)$.

A piecewise linear curve $\gamma$ can be generated by different ordered lists of points. For example one could omit point $\vecx_5$ from the list in Figure~\ref{crossings} without changing the curve.
Another consequence of Proposition~\ref{conn-sect} is that if $S$ and $\hat S$ generate the same curve $\gamma$ then $\vf(S) = \vf({\hat S})$.

In general if one deletes a point from a list, this may decrease the variation factor, but it will never increase it. The following proposition appeared in \cite{DL2}.

\begin{proposition}\label{add-point}
Let $S$ be an ordered list points, and let $S^+$ be a list
formed by adding an additional element into the list at some point. Then for any
line $\ell$ in the plane $\vf(S,\ell) \le \vf(S^+, \ell)$ and hence $\vf(S) \le \vf(S^+)$.
\end{proposition}

It remains now to verify that the original definition of $\var(f,\sigma)$ agrees with the one used since \cite{DL1}, that is, that
  \begin{equation}\label{varf-defs}
   \sup_{\gamma \in \Gamma_L} \frac{\cvar(f,\gamma)}{\vf(\gamma)}
   = \sup_S \frac{\cvar(f,S)}{\vf(S)}.
   \end{equation}
Here $\cvar(f,\gamma) = \sup \sum_{j=1}^m |f(\gamma(t_j)) - f(\gamma(t_{j-1}))|$ where the supremum is taken over all partitions $0 \le t_0 < t_1 < \dots < t_m \le 1$ such that $\gamma(t_j) \in \sigma$ for all $j$. If $\gamma$ does not intersect $\sigma$ or if it only meets $\sigma$ at a single point, then we set $\cvar(f,\gamma) = 0$.
The right-hand side is as defined in Section~\ref{S:Prelim}.

Given an ordered list $S \subseteq \sigma$, one may form $\gamma_S \in \Gamma_L$. It is clear that $\cvar(f,S) \le \cvar(f,\gamma_S)$ and $\vf(S) = \vf(\gamma_S)$  so the left-hand side of (\ref{varf-defs}) is at least as large as the right-hand side.

On the other hand, suppose that $\gamma \in \Gamma_L$ and that $[t_j]_{j=0}^m$ is an ordered list of values such that $\vecv_j = \gamma(t_j)$ lies in $\sigma$ for all $j$. Let ${\hat S} = [\vecv_0,\vecv_1, \dots,\vecv_m]$. One can write $\gamma$ as $\gamma_S$ for an ordered list of points $S$ which contains the points $\vecv_j$ in the appropriate order (see Figure~\ref{hat-S}). Since ${\hat S}$ is a sublist of $S$, by Proposition~\ref{add-point}, $\vf({\hat S}) \le \vf(S)$ and so
  \[ \frac{\sum_{j=1}^m |f(\gamma(t_j)) - f(\gamma(t_{j-1}))|}{\vf(\gamma)}
  = \frac{\sum_{j=1}^m |f(\gamma(t_j)) - f(\gamma(t_{j-1}))|}{\vf(S)}
  \le \frac{\cvar(f,{\hat S})}{\vf({\hat S})}.
  \]
This provides the reverse inequality and hence the two sides of (\ref{varf-defs}) are equal.

\begin{figure}[!ht]
\begin{center}
\begin{tikzpicture}[xscale=2.0,yscale=0.7]

\fill[blue!20] (0.4,1.5) circle (0.8cm);
\fill[blue!20] (2.4,1.5) circle (0.8cm);
\draw (-0.2,1.5) node {$\sigma$};

 \draw[thick, red] (0,0) -- (1,3) -- (2,0) -- (3,3) -- (4,0);
\draw (0,0) node[left] {$\vecx_0$};
\draw (0,0) node[circle, draw, fill=black!50,inner sep=0pt, minimum width=4pt] {};
\draw (1,3) node[left] {$\vecx_1$};
\draw (1,3) node[circle, draw, fill=black!50,inner sep=0pt, minimum width=4pt] {};
\draw (2,0) node[left] {$\vecx_2$};
\draw (2,0) node[circle, draw, fill=black!50,inner sep=0pt, minimum width=4pt] {};
\draw (3,3) node[right] {$\vecx_3$};
\draw (3,3) node[circle, draw, fill=black!50,inner sep=0pt, minimum width=4pt] {};
\draw (4,0) node[right] {$\vecx_4$};
\draw (4,0) node[circle, draw, fill=black!50,inner sep=0pt, minimum width=4pt] {};

\draw[ultra thick, blue] (0.3333333,1) -- (0.6666666,2) -- (2.333333,1) -- (2.666666,2);
\draw (0.3333333,1) node[circle, draw, fill=blue!50,inner sep=0pt, minimum width=6pt] {};
\draw (0.35,1) node[right] {$\vecv_0$};
\draw (0.6666666,2) node[circle, draw, fill=blue!50,inner sep=0pt, minimum width=6pt] {};
\draw (0.65,2) node[left] {$\vecv_1$};
\draw (2.333333,1) node[circle, draw, fill=blue!50,inner sep=0pt, minimum width=6pt] {};
\draw (2.35,1) node[right] {$\vecv_2$};
\draw (2.666666,2) node[circle, draw, fill=blue!50,inner sep=0pt, minimum width=6pt] {};
\draw (2.65,2) node[left] {$\vecv_3$};

\draw (1.2,2.5) node[right] {$\gamma$};

\draw[red] (2,0) node[circle, draw, fill=black!50,inner sep=0pt, minimum width=4pt] {};
\end{tikzpicture}
\caption{$\gamma = \gamma_{S_0}$ for
$S_0 = [\vecx_0,\vecx_1,\vecx_2,\vecx_3,\vecx_4]$.
If $\vecv_j = \gamma(t_j)$, are the chosen points on $\gamma$ which lie in $\sigma$, we can form
$S = [\vecx_0,\vecv_0,\vecv_1,\vecx_1,\vecx_2,\vecv_2,\vecv_3,\vecx_3,\vecx_4]$ and $\gamma = \gamma_S$ too. Then ${\hat S} = [\vecv_0,\vecv_1,\vecv_2,\vecv_3]$ is a sublist of $S$ so $\vf({\hat S}) \le \vf(S) = \vf(\gamma)$.}\label{hat-S}
\end{center}
\end{figure}

\begin{acknowledgements}
The work of the first author was financially supported
by the Ministry of Higher Education and Scientific Research
of Iraq.
The authors would also like to thank the referee for several suggestions which have improved the paper, and Alan Stoneham for his assistance.
\end{acknowledgements}

\end{document}